\DeclareRobustCommand{\rchi}{{\mathpalette\irchi\relax}}
\newcommand{\irchi}[2]{\raisebox{\depth}{$#1\chi$}} % inner command, used by \rchi , makes a higher \chi
\theoremstyle{plain}
\newtheorem{theorem}{Theorem}
\newtheorem{lemma}{Lemma}[section]
\newtheorem{corollary}[theorem]{Corollary}
\newtheorem{proposition}[lemma]{Proposition}
\newtheorem{remark}[lemma]{Remark}
\newtheorem{definition}[lemma]{Definition}
\newtheorem{conjecture}[theorem]{Conjecture}
\numberwithin{figure}{section}
 \newcommand{\R}{\mathbb{R}} 
\newcommand{\bee}{\begin{equation}}
\newcommand{\eee}{\end{equation}} 
\newcommand{\bees}{\begin{equation*}}
\newcommand{\eees}{\end{equation*}} 
\newcommand{\ds}{\displaystyle} 
\numberwithin{equation}{section}
\numberwithin{figure}{section}
\def\bees{\begin{equation*}}
\def\eees{\end{equation*}}
\def\bee{\begin{equation}}
\def\eee{\end{equation}}
\def\ds{\displaystyle}
\def\Det{{\mathcal{D}et}}
\def\cc{{\rm curl}~ {\rm curl}\,} 
\def\R{{\mathbb R}}
\def\e{\varepsilon} 
\definecolor{Green}{rgb}{0, 0.65,0}
\begin{document}

\title[Convexity of weakly regular surfaces]
{Convexity of weakly regular surfaces of distributional nonnegative intrinsic curvature} 
\author{Mohammad Reza Pakzad}

\address[Mohammad Reza Pakzad]{Laboratoire IMATH, Universit\'e de Toulon, 
CS 60584, 83041 TOULON CEDEX 9,  France}
 \email{pakzad@univ-tln.fr}

\subjclass[2020]{53A05, 53C24, 53C21, 35J96}
\keywords{rigidity, surfaces of nonnegative curvature, differential geometry at low regularity}
 
\date{}
 
\begin{abstract} We prove that the image of an isometric embedding into $\R^3$ of a  two dimensionnal complete  Riemannian manifold $(\Sigma, g)$ without boundary  is a convex surface, provided that, first, both the embedding and the metric $g$ enjoy a $C^{1,\alpha}$ regularity for some $\alpha>2/3$, and second, the distributional Gaussian curvature  of $g$ is nonnegative and nonzero.  The analysis must pass through some key observations regarding solutions to the very weak Monge-Amp\`ere equation.  
 \end{abstract}
%\tableofcontents

\maketitle
\section{Introduction.}\label{intro}

\subsection{Background}
It  is a well-known fact of differential geometry that any $C^2$-smooth complete surface  in $\R^3$, whose Riemannian metric has non-negative and non-zero intrinsic Gaussian curvature,  is a convex surface, i.e.\@ it is the boundary of a convex region. This statement is one of the consequences of the fact that, by  Theorema Egregium, the intrinsic and extrinsic notions of curvatures coincide for a $C^2$ surface.  

One could directly work with the notion of extrinsic curvature to obtain similar results. Pogorelov generalized the above statement on convexity of surfaces to the case of $C^1$ surfaces with bounded extrinsic curvature, see \cite[Theorem 2, p.\@ 615]{Po73}. On the other hand, by the celebrated results of Nash and Kuiper \cite{Nash2, kuiper}, there exist $C^1$ non-convex surfaces which are isometrically equivalent with the unit sphere. The Riemannian metric of these surfaces is that of the sphere whose intrinsic  curvature is positive and constant; yet they are not surfaces of bounded extrinsic curvature. The following question can hence be put forward: {\bf Under which regularity assumptions on a surface and its metric, the intrinsic and extrinsic notions of the Gaussian curvature coincide?} 

The above question is a variant of the many problems on the flexibility vs.\@ rigidity dichotomy regarding the solutions to nonlinear PDEs; (see \cite{DS-survey} for a background survey on this aspect of our problem). On one hand, it can be shown that the identity of the two curvatures fails to be true in general for surfaces of $C^{1,\alpha}$ regularity with $\alpha<1/5$ \cite{1/5,  CSz19}.   On the other hand, when $\alpha>2/3$ and $\beta>0$, positive answers to the question was provided in \cite{bori1} and in \cite{CDS} (see also \cite{CDS-errata}) for $C^{1,\alpha}$-isometric embeddings of 2d complete surfaces, whose Riemannian metrics are $C^{2,\beta}$-regular and of  positive Gaussian curvature.  
 
In this article, we  generalize  the main results of \cite{bori1, CDS} as follows: For the same  exponent regime $\alpha>2/3$, we assume only a  $C^{1,\alpha}$ regularity for the Riemannian metric, and we relax the positivity condition of the curvature to the condition that the now distributionally defined intrinsic curvature is non-zero and nonnegative. The main ingredient of the proof is to show that provided  a Riemannian metric $g\in C^{1,\alpha}$ on a domain $\Omega \subset \R^2$, whose distributional Gaussian curvature  is nonnegative, and any isometric embedding  $u\in C^{1,\alpha}$ of $(\Omega, g)$ into $\R^3$, the image $u(\Omega)$ is of bounded extrinsic curvature. Our analysis uses the ideas of Conti, De Lellis, and Sz\'ekelyhidi  in \cite{CDS, CDS-errata} but cannot directly follow their methodology: The best  conceivable result from their approach, if only the nonnegativity of the curvature is assumed, is that the degree of the Gauss map $\vec n$ at any point in its image is nonnegative, which  is not enough to conclude with a bound on the total variation of  the areas of  images of $\vec n$ (a necessary step in bounding the extrinsic curvature). 
\medskip 

In order to pass to the analysis of  the nonnegative curvature case, the image surface close to a point is considered as the graph of a function $v$. Writing the curvature in terms of $v$ leads to a version of the Monge-Amp\`ere equation with nonnegative data. A perturbation argument, originally due to Kirchheim \cite{Ki01}, shows that  if  $v$ satisfies  $\det (\nabla^2 v) \ge 0$, then $\nabla v$ can be uniformly approximated with deformations having positive Jacobian determinants, yielding that the positivity of the topological degree of $\nabla v$ on its image points. This latter conclusion is the key for proving the bounded extrinsic property of the graph of $v$. The main technical difficulty is to  apply this approach in  a weak regularity setting, specially in a context where nonlinear expressions involving  distributions in negative order function spaces enter into the picture.

 \medskip
 
  To put  our results in a more general context, our results must be compared with the statements in \cite{Nash2, kuiper, Po56, bori1, bori2, Po73, CDS, CDS-errata, HoVe, 1/5, CSz19, DP20, LPS21} on isometric immersions,  in \cite{Ki01, Pak, Sve, LMP17} on the rigidity of Sobolev solutions to the Monge-Amp\`ere equation,  and in \cite{BN11, Kor, Kor2, GO20, LS20, LPS21} on geometric or topological properties of  weakly regular deformations.

\subsection{Main results}
 
 We adapt the terminology of \cite{GMS}.   We say $f\ge h$  for two distributions $f,h\in \mathcal{D}'(\Omega)$, when 
 $$
\forall \varphi \in C^\infty_c(\Omega) \,\, \varphi \ge 0 \implies f [\varphi] \ge h  [\varphi]. 
$$   We remark that by \cite[Theorem 1.4.2(ii)]{GMS} any nonnegative distribution $f \in \mathcal D'(\Omega)$ can be extended as bounded linear operator on $C_c(\Omega)$ and hence induces a Radon measure  $\mu_f \in {\mathcal M}^+(\Omega)$, satisfying
$$
\forall \varphi \in C^\infty_c(\Omega)\,\, \int_\Omega \varphi\,{\rm d}\mu_f = f [\varphi].
$$
 Conversely, any Radon measure  induces a nonnegative distribution. Throughout the paper, we will hence use the notions of Radon measures and nonnegative distributions interchangeably. For $\mu, \nu \in \mathcal{M}(\Omega)$ we say $\mu\ge \nu$   if and only if  for all Borel sets $A \subset \Omega\,\, \, \mu(A) \ge  \nu(A)$. This is an equivalent condition to $f \ge h$ as distributions, when $\mu$ and $\nu$ are respectively induced by $f$ and  $h$. 

\medskip
Here  we also  motivate and provide the definition of the distributional Gaussian curvature -which generalizes the standard notion to Riemannian 
metrics of lower regularity.  Let $\Omega \subset \R^2$ be any open set. We recall that the Christoffel symbols associated with  a Riemannian metric $g =[g_{ij}] \in C^2(\Omega, \R^{2\times 2}_{sym, pos})$       are given by
$$
\Gamma^i_{jk} (g) = \frac 12 g^{im} (\partial_k g_{jm} +   \partial_j g_{km} - \partial_m g_{jk}),
$$ 
where the Einstein summation convention is used. In view of the formula for the $(0,4)$-Riemann curvature tensor \cite[Equation (2.1.2)]{hanhong}
$$
R_{iljk} = g_{lm} (\partial_k \Gamma^m_{ij}  -\partial_j \Gamma^m_{ik}   + \Gamma^m_{ks} \Gamma^s_{ij} - \Gamma^m_{js} \Gamma^s_{ik}),  
$$ we have:
$$
\begin{aligned}
 R_{1212}(g) & = g_{1m}(\partial_1 \Gamma^m_{22} - \partial_2 \Gamma^m_{21} + \Gamma^m_{1s} \Gamma^{s}_{22}  
- \Gamma^m_{2s} \Gamma^{s}_{21})\\ & =
\partial_1 (g_{1m} \Gamma^m_{22} ) - \partial_2 ( g_{1m} \Gamma^m_{21})  -\partial_1 g_{1m} \Gamma^m_{22} 
+ \partial_2 g_{1m} \Gamma^m_{21} + g_{1m}(\Gamma^m_{1s} \Gamma^{s}_{22}  
- \Gamma^m_{2s} \Gamma^{s}_{21}) 
\\&  =  2 \partial_{12} g_{12} - \partial_{11} g_{22} - \partial_{22} g_{11}   -\partial_1 g_{1m} \Gamma^m_{22} 
+ \partial_2 g_{1m} \Gamma^m_{21}  + g_{1m}(\Gamma^m_{1s} \Gamma^{s}_{22}  
- \Gamma^m_{2s} \Gamma^{s}_{21})  \\ & = - {\rm curl}\, {\rm curl}\,\, g    -\partial_1 g_{1m} \Gamma^m_{22} 
+ \partial_2 g_{1m} \Gamma^m_{21}  + g_{1m}(\Gamma^m_{1s} \Gamma^{s}_{22}  
- \Gamma^m_{2s} \Gamma^{s}_{21}).
 \end{aligned} 
$$  Moreover, by the Gauss equation \cite[Equation (2.1.7)]{hanhong}, we have for the Gaussian curvature of $g$:
$$
\kappa(g):= \frac{R_{1212}}{\det g}. 
$$   The above calculations motivate the following definition:
\begin{definition}
Assume $U \subset \R^2$ is an open set, and $g\in  C^1 (U, \R^{2\times 2}_{sym, pos}) $. 
Let $$
L(g):= \frac{1}{\det g} \Big (-\partial_1 g_{1m} \Gamma^m_{22} 
+ \partial_2 g_{1m} \Gamma^m_{21}  + g_{1m}(\Gamma^m_{1s} \Gamma^{s}_{22}  
- \Gamma^m_{2s} \Gamma^{s}_{21}) \Big ) \in C(U) . 
$$
The distributional Gaussian curvature $\kappa_g$  of $g$ is defined  by
$$
\forall \varphi \in C_c^\infty(U) \quad \kappa_g [\varphi]:=  \int_U \frac{1}{\det g} ({\rm curl}\,\, g) \cdot  \nabla^\perp \varphi +  
({\rm curl}\,\, g)\cdot  \nabla^\perp   (\frac{1}{\det g}) \varphi + L(g) \varphi,
$$  which can be extended to a bounded operator on $W^{1,1}_0(U)$ if $g\in C^1(\overline U)$.
 \end{definition}     
       
 We finally need to provide the following definition on surfaces of bounded extrinsic curvature, which follows
 \cite[p.\@ 590]{Po73}.
       
\begin{definition}\label{bxc}
Let $S\subset \R^3$ be a $C^1$ surface.  We say that $S$ has the  
bounded extrinsic curvature property whenever, the set function ${\mathcal H}^2 ({\vec n}(F))$, defined for $F\subset S$ and  induced by the Gauss map ${\vec n}:S\to {\mathbb S}^2$,  is of bounded total variation over $S$, i.e., there exists a constant $C=C(S)<\infty$ such that for any finite collection $\{F_i\}_{i=1}^{k}$ of pairwise disjoint closed subsets of $S$
$$
\sum_{i=1}^k  {\mathcal H}^2 ({\vec n}(F_i)) < C.
$$  \end{definition}  
 
If a surface $S$ has the  bounded extrinsic curvature property, the {\em absolute curvature} 
of an open set $O$ in $S$  is defined to be the supremum of  $\sum_{i=1}^k {\mathcal H}^2({\vec n}(F_i))$, over all
finite collections $\{F_i\}_{i=1}^k$ of  closed
subsets of $O$ which are pairwise disjoint.  The absolute curvature of any subset $A$
of $S$ is defined as the infimum of its absolute curvatures on open
subsets containing $A$.  By \cite[p.\@ 590, Theorem 2]{Po73} (see also \cite[p.\@ 573, Theorem 1]{Po73}), the absolute curvature is a $\sigma$-additive set function on the $\sigma$-algebra of Borel subsets of $S$.

\medskip

For a surface of  bounded extrinsic curvature $S$, we will refer to  \cite[p.\@ 591]{Po73} for the  definitions of  {\em regular}, {\em elliptic}, {\em parabolic},  {\em hyperbolic}, and   {\em flat} points $p\in S$. 
In particular a point $p \in S$ is defined to be a 
{\em regular} point if the tangent plane at $p$ is not parallel to any other tangent plane in 
a  sufficiently small neighbourhood \begin{equation}\label{reg_def}
\exists  O\subset S \,\, \mbox{open neighborhood of} \,\,p  \quad \forall q\in O \setminus \{p\}  \quad {\vec n}(q)\neq {\vec n}(p).
\end{equation} For every set $A\subset S$,  the
 {\em positive (resp.\@ negative) curvature} of $A$ consists in the absolute
curvature of the subset of $S$ formed by elliptic (resp.\@ hyperbolic) points of $A$. The {\em total curvature} of $A$ is defined to be the
difference between its positive and negative curvatures, and  
$S$ is a {\em surface of nonnegative curvature} 
if the negative curvature of $S$ equals zero \cite[p.\@ 611]{Po73}.

\medskip

 %  For definitions regarding surfaces $S\subset \R^3$ of bounded extrinsic curvature we refer to %\cite[Chapter IX]{Po73}, see also Section \ref {iso}.   

 Our main result is the following.
 
  \begin{theorem}\label{iso-bou-cur}
Assume that  $\Omega\subset \R^2$ is an open set and that $\alpha> 2/3$. Let $g$ be a $C^{1,\alpha}$ Riemannian metric on $\Omega$ for which the distributional Gaussian curvature is nonnegative (and hence is  a Radon measure over $\Omega$). If $u\in C^{1, \alpha}((\Omega, g), \R^3)$ is an isometric embedding, $S=u(\Omega)$, and $S'$ is a surface compactly contained in $S$, then  $S'$  has the bounded extrinsic curvature property, and  is of nonnegative curvature.   If $\kappa_g$ is, moreover, nonzero, then there exists $S'$ as above such that the positive curvature of $S'$ is non-zero.   
  \end{theorem}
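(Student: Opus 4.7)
My approach is to localize the problem in graph coordinates and reduce it to a very weak Monge--Amp\`ere equation with nonnegative right hand side. Fix $p_0\in \Omega$; after a rigid motion in $\R^3$, a neighborhood of $u(p_0)$ in $S$ is the graph of some $v\in C^{1,\alpha}(U,\R)$ over an open $U\subset \R^2$, with $v(0)=0$ and $\nabla v(0)=0$. Composing $u$ with the vertical projection produces a $C^{1,\alpha}$ homeomorphism $\Phi$ from a neighborhood of $p_0$ in $\Omega$ onto $U$, and the isometry hypothesis becomes $\Phi^* h = g$, where $h = I_2 + \nabla v\otimes \nabla v$ is the first fundamental form of the graph. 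Because $\alpha>2/3$ the distribution $\Det(\nabla^2 v)$, defined via the cofactor--divergence (Iwaniec--\v Sver\'ak) identity, is well-posed, and writing the Gauss equation for $h$ distributionally yields an identity
\[
 \Det(\nabla^2 v) \;=\; \bigl(1+|\nabla v|^2\bigr)^{3/2}\,\Phi_{\#}\!\bigl(\sqrt{\det g}\;\kappa_g\bigr)
\]
of Radon measures on $U$, which is nonnegative by hypothesis on $\kappa_g$.

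Thus $v$ satisfies the very weak Monge--Amp\`ere inequality $\Det(\nabla^2 v)\ge 0$ in the sense of \cite{CDS,CDS-errata}. I next invoke a Kirchheim-type perturbation, in the spirit of \cite{Ki01} and adapted to the present regularity along the lines of \cite{CDS-errata,LMP17}, to produce on each compactly contained $U'\Subset U$ a sequence of $C^1$ maps $\Psi_\e:U'\to \R^2$ with $\Psi_\e\to \nabla v$ uniformly and $\det\nabla\Psi_\e>0$ pointwise. The stability of the Brouwer degree under uniform convergence then gives, for every ball $B\Subset U'$ and every $y\notin \nabla v(\partial B)$,
\[
 \deg(\nabla v,B,y)\;\ge\;0,
\]
with strict positivity on any open subset of $\nabla v(B)$ carrying $\Det(\nabla^2 v)$-mass.

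To transfer this to the Gauss map $\vec n=(1+|\nabla v|^2)^{-1/2}(-\nabla v,1)$ of $S$, observe that $\xi\mapsto (1+|\xi|^2)^{-1/2}(-\xi,1)$ is a $C^\infty$ diffeomorphism onto the open upper hemisphere, so nonnegativity of the degree of $\nabla v$ passes to nonnegativity of the degree of $\vec n$. An area formula with multiplicity then yields
\[
 \mathcal H^2\bigl(\vec n(F)\bigr)\;\le\;\mu_{\kappa_g}(F) \qquad\text{for every Borel } F\Subset \Phi^{-1}(U'),
\]
which, summed over a countable cover of $S$ by such graph patches, gives the bounded extrinsic curvature of $S$, its nonnegativity, and the nontriviality of its positive part whenever $\kappa_g\ne 0$.

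The main obstacle -- and where I expect the bulk of the technical work to lie -- is to justify the distributional identification between $\Det(\nabla^2 v)$ and the pushforward of $\kappa_g$ at this low regularity: both sides are only distributions, the change of variables involves a non-smooth homeomorphism $\Phi$, and the quadratic Hessian--determinant expressions must be controlled under mollification. The exponent $\alpha>2/3$ enters precisely as the critical threshold at which all such quadratic terms pass to the limit. A secondary but nontrivial issue is to arrange the Kirchheim-type smoothing so that $\det\nabla\Psi_\e>0$ holds pointwise rather than merely in a distributional sense, which is what permits the degree argument to conclude cleanly.
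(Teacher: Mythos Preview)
Your proposal follows essentially the same strategy as the paper: localize to graph coordinates, establish $\Det\mathcal{D}^2 v \ge 0$ via a distributional curvature identity under the $C^{1,\alpha}$ change of variables (which the paper carries out by comparing the mollifications $g'_\e$, $g'_{(\e)}:=\mathrm{Id}+\nabla v_\e\otimes\nabla v_\e$, and $\xi_\e^\ast g$, with the $\e^{3\alpha-2}$ error explaining the threshold $\alpha>2/3$), and then deduce bounded and nonnegative extrinsic curvature from degree-theoretic control of $\nabla v$. One clarification on your ``secondary issue'': the paper does \emph{not} arrange pointwise $\det\nabla\Psi_\e>0$; instead it applies the degree formula directly to the $C^{0,\alpha}$ map $u^\delta(x)=\nabla v(x)+\delta(-x_2,x_1)$, whose \emph{distributional} Jacobian equals $\Det\mathcal{D}^2 v+\delta^2>0$, and lets $\delta\to 0$ --- this yields $\deg(\nabla v,U,\cdot)\ge 1$ (not merely $\ge 0$) on image points, which is what gives the area bound, and the nonnegative-curvature and nonzero-positive-curvature conclusions then go through Pogorelov's index classification of regular points rather than a bare area formula.
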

  
\begin{remark}
One can potentially replace definition \ref{bxc} with a local variant: i.e.\@ to define surfaces of locally bounded extrinsic curvature  where the finiteness  of the total variation is only assumed to hold true on relatively compact open subsets of $S$. In that case a surface of locally bounded extrinsic curvature of nonnegative or nonzero curvature would be also well-defined, and  under the assumptions of Theorem \ref{iso-bou-cur}, one can show that $S=u(\Omega)$ is a surface of locally bounded extrinsic curvature,  of nonnegative curvature, which is of nonzero curvature provided  $\kappa_g$ is nonzero. We prefer to stay with the terminology of \cite{Po73} to avoid any confusion, even though the local variant is implicitly assumed in e.g.\@ \cite[Theorem 2, p.\@ 615]{Po73}.
 \end{remark}         

As we shall see in Proposition \ref{chart-cur},  the distributional Gaussian curvature is invariant under a natural change of coordinate 
formula for smooth enough changes of coordinates, establishing that its definition can be  extended to $C^1$ metrics over smooth  manifolds of two dimensions. The following corollary  of  Theorem \ref{iso-bou-cur} on surfaces of nonnegative distributional curvature is immediately obtained by \cite[Theorem 2, p.\@ 615]{Po73}, generalizing the results of \cite{bori1}, 
and  \cite[Corollary B]{CDS-errata}. Note that an examination of the proof of \cite[Theorem 2, p.\@ 615]{Po73} shows that it uses the bounded extrinsic curvature property assumption only on  bounded sections of  the possibly unbounded surface. We leave the verification of this detail to the reader. As a consequence, Theorem \ref{iso-bou-cur} can be applied to reach the following conclusion.     
  
  \begin{corollary} \label{cor-convex} 
 Let $\alpha> 2/3$, and $\Sigma$ be a  complete  2-dimensional  $C^2$ manifold with no boundary. Assume moreover that $g$ is a $C^{1,
 \alpha}$ Riemannian metric on $\Sigma$ for which the distributional Gaussian curvature is nonnegative and nonzero.  If $u\in C^{1, \alpha}((\Sigma, g), \R^3)$ is an isometric embedding, then, $u(\Sigma)$ is either a closed convex surface, or an unbounded  convex surface with no boundary. 
  \end{corollary}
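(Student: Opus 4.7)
The plan is to reduce the statement to the setting of Theorem~\ref{iso-bou-cur} via a $C^2$ atlas on $\Sigma$, and then invoke Pogorelov's classification of complete surfaces of bounded extrinsic curvature in $\R^3$.

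First I would fix a locally finite $C^2$ atlas $\{(U_i,\varphi_i)\}_{i\in I}$ with $\varphi_i:U_i\to\Omega_i\subset\R^2$. The pullback metric $g_i:=(\varphi_i^{-1})^{*}g$ is of class $C^{1,\alpha}$ on $\Omega_i$, and $u_i:=u\circ\varphi_i^{-1}:\Omega_i\to\R^3$ is a $C^{1,\alpha}$ isometric embedding of $(\Omega_i,g_i)$. By Proposition~\ref{chart-cur}, the distributional Gaussian curvature $\kappa_{g_i}$ on $\Omega_i$ is, up to a positive Jacobian factor, the pullback of $\kappa_g\mres U_i$; in particular it is a nonnegative Radon measure. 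Since $\kappa_g$ is a nonzero measure on $\Sigma$ and the atlas covers $\Sigma$, there exists at least one index $i_0$ for which $\kappa_{g_{i_0}}\ne 0$ on $\Omega_{i_0}$.

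Second I would apply Theorem~\ref{iso-bou-cur} chart by chart: each local image $u(U_i)=u_i(\Omega_i)$ has the bounded extrinsic curvature property and nonnegative extrinsic curvature, and for $i=i_0$ the positive extrinsic curvature is nonzero. Because $(\Sigma,g)$ is complete, connected (treating components separately if needed), without boundary, and $u$ is an isometric embedding, the image $S:=u(\Sigma)$ with the induced intrinsic metric is a complete $C^{1,\alpha}$ embedded surface in $\R^3$ without boundary. Gluing the local conclusions along the locally finite cover, $S$ inherits globally the bounded extrinsic curvature property, has nonnegative extrinsic curvature everywhere, and has nonzero positive curvature.

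Third I would invoke \cite[Theorem~2, p.\@ 615]{Po56}: a complete surface in $\R^3$ of bounded extrinsic curvature whose curvature is nonnegative and not identically zero is either a closed convex surface or an unbounded convex surface without boundary. Applied to $S$, this yields the corollary.

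The only real bookkeeping — rather than a genuine obstacle — is to check that the local notions of bounded extrinsic curvature and of the signed curvature measure, defined in Section~\ref{iso} on flat planar domains, transfer coherently to a single global object on $S$. This relies on $u$ being an \emph{embedding} (so that images of overlapping charts genuinely agree as subsets of $\R^3$, with no identification issues to handle) and on the chart invariance already established in Proposition~\ref{chart-cur}; once these are in hand, Pogorelov's classification gives the conclusion at once.
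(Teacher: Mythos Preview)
Your proposal is correct and follows essentially the same route as the paper, which does not give a separate proof but simply notes that the corollary is ``immediately obtained by \cite[Theorem~2, p.\@ 615]{Po56}'' from Theorem~\ref{iso-bou-cur}. Your chart-by-chart reduction and the gluing of the local bounded-extrinsic-curvature and nonnegative-curvature conclusions make explicit what the paper leaves implicit; the final appeal to Pogorelov's classification is exactly the paper's intended step.
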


 \begin{remark}\label{onefifth}
 The above  conclusions are not true if $\alpha<1/5$, even for smooth metrics with positive curvature \cite{1/5, CSz19}. The  question remains open for the range $1/5\le \alpha \le 2/3$. Gromov \cite[Section 3.5.5.C, Open Problem 34-36]{gromov2} conjectures the critical exponent to be $\alpha=1/2$, see also \cite{GO20}.
\end{remark} 
   
To achieve the above goals, we will have to study the very weak  Monge-Amp\`ere equation in  an open domain $\Omega \subset\R^2$: 
\bee \label{MA}
\Det {\mathcal D}^2 v  := -\frac 12 \cc  (\nabla v \otimes \nabla v) =  f \in \mathcal{D}'(\Omega) 
\eee  See \cite{Iwa, FM} for an introduction to the very weak Hessian determinant.  We will  in particular consider the $C^{1,\alpha}$-solutions\footnote{$~$Note that  the solutions are not assumed to be convex.}\@ $v$ to \eqref{MA} for which the given distribution $f\ge 0$ under the assumption that $\alpha> 2/3$.  Let us recall that, on the other hand, $v$ is called an Alexandrov solution to  $\det\nabla^2v = \mu$ on $\Omega$ when, for any convex subset $U\subset \Omega$,  $v$ is convex  on $U$, $\mu_v(A) = |\partial v(A)|$ defines a Borel measure on $U$, and  $\mu_v = \mu$. In this line another minor consequence of our analysis is the following statement.

\begin{theorem}\label{full-plane}
Assume that $2/3 < \alpha <1$ and that $f\in \mathcal D'(\R^2)$ is a nonnegative nonzero distribution.  If  $v\in C^{1,\alpha}(\R^2)$  is a solution to $\Det{\mathcal D}^2 v = f$ in $\mathcal{D'}(\R^2)$, then modulo a sign  the function $v$ is convex over $\R^2$ and  is an Alexandrov solution  to  
$\det\nabla^2v = \mu_f$ on $\R^2$.   \end{theorem}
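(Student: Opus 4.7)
The plan is to deduce Theorem~\ref{full-plane} from the same circle of ideas that underpin Theorem~\ref{iso-bou-cur}, applied directly to the graph $u(x)=(x,v(x))$, and then to identify the Alexandrov measure of $v$ by mollification.

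First I would interpret the very weak Monge-Amp\`ere equation $\Det\mathcal{D}^2 v=f\ge 0$ as the statement that the graph $S:=\{(x,v(x)):x\in\R^2\}\subset\R^3$ carries nonnegative intrinsic curvature in the distributional sense, with $\mu_f$ playing the role of $\kappa_g\sqrt{\det g}$. Although the induced metric $g_v=I+\nabla v\otimes\nabla v$ is only $C^{0,\alpha}$ (so $\kappa_{g_v}$ is not literally covered by the definition above), the curvature analysis at the heart of the proof of Theorem~\ref{iso-bou-cur}---Kirchheim's perturbation together with the degree computations for $\nabla v$---can be carried out directly on $v$ via the equation $\Det\mathcal{D}^2 v=f$. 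Doing so on any bounded open subset $\Omega'\Subset\R^2$ yields that $u|_{\Omega'}$ parametrizes a surface of bounded extrinsic curvature whose nonnegative Pogorelov curvature is nontrivial precisely on the support of $\mu_f$.

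Next I would invoke Pogorelov's theorem for complete surfaces of bounded extrinsic nonnegative curvature (\cite[Theorem 2, p.\@ 615]{Po56}, which powers the corollary above): since $g_v\ge g_{\mathrm{Euc}}$ makes $S$ complete and the positive curvature is nonzero, $S$ is a convex surface in $\R^3$, either closed or unbounded. Being a graph over $\R^2$, $S$ is topologically $\R^2$ and cannot be closed, so it is the boundary of an unbounded convex body $K\subset\R^3$. Because $K$ is convex and $S$ meets every vertical line in a single point, $K$ lies entirely on one side of each such line relative to $S$; by continuity the side is constant on $\R^2$, so $K$ is either the epigraph or the hypograph of $v$. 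Hence $v$ or $-v$ is globally convex.

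Finally I would identify $\mu_v$ with $\mu_f$ by mollification. Without loss of generality $v$ is convex, so $v_\e:=v*\rho_\e$ is smooth convex and $\Det\mathcal{D}^2 v_\e=\det\nabla^2 v_\e$ pointwise. On one hand, $v_\e\to v$ locally uniformly with both convex, so classical stability of Alexandrov measures gives $\det\nabla^2 v_\e\,\mathrm{d}x\to\mu_v$ weak-$\ast$ on compact sets. On the other hand, since $\nabla v\in C^{0,\alpha}$, the product $\nabla v_\e\otimes\nabla v_\e$ converges to $\nabla v\otimes\nabla v$ in $C^0_{\mathrm{loc}}$, and continuity of the distributional operator $\cc$ yields $\Det\mathcal{D}^2 v_\e\to f$ in $\mathcal{D}'(\R^2)$. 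Comparing the two limits gives $\mu_v=\mu_f$, so $v$ is indeed the Alexandrov solution of $\det\nabla^2 v=\mu_f$ on $\R^2$.

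The main obstacle I anticipate is the first step: rigorously importing the machinery of Theorem~\ref{iso-bou-cur} to the graph $u(x)=(x,v(x))$ when the induced metric $g_v$ is only H\"older continuous. This requires isolating from the proof of Theorem~\ref{iso-bou-cur} those arguments that address $\nabla v$ and $\Det\mathcal{D}^2 v$ directly---Kirchheim-type approximation, the topological degree of $\nabla v$, and the passage to bounded extrinsic curvature---and verifying that none of them require more than $C^{1,\alpha}$ regularity of the immersion itself rather than of the intrinsic metric.
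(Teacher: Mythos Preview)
Your convexity step is essentially the paper's argument, and your anticipated obstacle is a non-issue: the relevant tool is Proposition~\ref{convex2}, which is already formulated purely in terms of $v\in C^{1,\alpha}$ and the sign condition $\Det\mathcal{D}^2 v\ge 0$, with no reference to the induced metric $g_v$. There is nothing to ``isolate'' from the proof of Theorem~\ref{iso-bou-cur}; one simply applies Proposition~\ref{convex2} on the balls $B_R$ (where $\mu_f(B_R)<\infty$ since $\mu_f$ is Radon) and then invokes Pogorelov's theorem for complete surfaces of nonnegative curvature, exactly as the paper does (it cites \cite[Theorem~1, p.~603]{Po73} rather than \cite{Po56}, but the content is the same).

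Your identification $\mu_v=\mu_f$ via mollification, however, is a genuinely different route from the paper's. The paper stays within its degree-formula framework: it shows $\nabla v$ is injective on the set of regular points, uses Pogorelov's structure theory to see that $|\nabla v(\R^2\setminus\R^2_r)|=0$, deduces that $\deg(\nabla v,U,\cdot)=1$ a.e.\ on $\nabla v(U)\setminus\nabla v(\partial U)$, and then reads off $\mu_v(U)=|\nabla v(U)|=\mu_f(U)$ from the exact degree identity \eqref{area2}. Your argument instead combines the weak-$\ast$ stability of the Monge--Amp\`ere measure under locally uniform convergence of convex functions with the distributional continuity of $\Det\mathcal{D}^2$ under $C^0$ convergence of $\nabla v_\e\otimes\nabla v_\e$; both limits of $\det\nabla^2 v_\e$ agree, giving $\mu_v=\mu_f$. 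Your approach is shorter and relies only on a classical fact from convex analysis, at the cost of importing that stability result from outside; the paper's approach is more self-contained, reusing the degree machinery already developed, and has the side benefit of making the equality $\mu_v(U)=\mu_f(U)$ explicit on each piecewise-smooth $U$ without appealing to weak convergence. Both are correct.
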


  \begin{remark} 
  Note that since $v\in C^1$, we have $|\partial v(A)| =|\nabla v(A)|$ for all Borel sets $A \subset U$.  \end{remark}
   
  \begin{remark} A similar result for solutions on a general domain $\Omega \subset \R^2$ was announced by Lewicka and the author in \cite[Theorem 1.4]{LP17}. The proof was supposed to appear in a forthcoming paper, but a gap was found. The obstacle is not overcome to this day and the problem for arbitrary  domains remains open. 
  \end{remark}
      
 As it is suggested in Remark \ref{onefifth}, the main challenging  problem  is to extend the results of this paper  to the case $\alpha>1/2$.  Apart from partial results regarding isometric extensions \cite{DI20, CI20} for this regime, the problem is still largely open and cannot be answered using the current methods.  Here, we  conjecture that slight improvements are possible  regarding  the $2/3$ regime, in the line of analysis in \cite{LS20, LPS21}.

\begin{conjecture}
The above results can be generalized with some extra technical maneuvers to Besov regularity regime  $B^{1+s}_{2/s,\infty}$, for $s>2/3$.
\end{conjecture}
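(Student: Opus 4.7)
The plan is to transfer, piece by piece, each of the three ingredients of the $C^{1,\alpha}$ proof into the Besov scale $B^{1+s}_{2/s,\infty}$ with $s>2/3$, using the paraproduct calculus in place of Hölder interpolation. First I would record the embedding $B^{1+s}_{2/s,\infty}(\Omega) \hookrightarrow C^1(\Omega)$ (sitting at the critical Besov--Sobolev line $s=2/p$ with $p=2/s$), which although not giving any Hölder control on the gradient still keeps the Gauss map $\vec n$ of $S=u(\Omega)$, the graphing gradient $\nabla v$, and the metric $g$ classical and continuous. This is what makes topological-degree arguments and chart changes still meaningful in this regularity class and suggests the same geometric statements should hold verbatim.

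Second, I would establish the commutator/stability result: if $v \in B^{1+s}_{2/s,\infty}$ and $v_\epsilon = v * \rho_\epsilon$, then
\bee\label{plan-comm}
\cc\bigl(\nabla v_\epsilon \otimes \nabla v_\epsilon\bigr) - \bigl(\cc(\nabla v \otimes \nabla v)\bigr)*\rho_\epsilon \longrightarrow 0 \quad \text{in } \mathcal D'(\Omega),
\eee
and analogously for the curvature expression entering $\kappa_g$. The mechanism is a Constantin--E--Titi-type bound: one splits the product into high-high, high-low and low-low paraproducts and uses $\|\nabla v_\epsilon - \nabla v\|_{L^{2/s}} = o(\epsilon^s)$ together with $\|\nabla^2 v_\epsilon\|_{L^{2/s}} = O(\epsilon^{s-1})$, which gives an error bounded, modulo $o(1)$, by $\epsilon^{2s-1}$ in $W^{-1,1/s}$. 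Since $2s-1>1/3$ the error vanishes. This is the Besov replacement for the $C^{1,2\alpha-1}$ commutator of \cite{CDS,CDS-errata} used throughout the present paper, and it delivers a well-defined $\Det \mathcal D^2 v$ and $\kappa_g$ as distributions (hence as Radon measures in the nonnegative case), together with the invariance under smooth chart changes that Proposition \ref{chart-cur} needs.

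Third, once \eqref{plan-comm} is in place, Kirchheim's perturbation argument goes through essentially unchanged: the regularised $w_\epsilon := v_\epsilon + \tfrac{\lambda_\epsilon}{2}|x|^2$ is convex whenever $\lambda_\epsilon$ exceeds the negative part of the smallest eigenvalue of $\nabla^2 v_\epsilon$, which itself satisfies $\lambda_\epsilon \to 0$ thanks to the distributional bound $\Det\mathcal D^2 v \geq 0$ and \eqref{plan-comm}. Because $\nabla v \in C^0$, the gradients $\nabla w_\epsilon$ converge uniformly to $\nabla v$ on compacts, so the Brouwer degree of $\nabla v$ is nonnegative everywhere on its image. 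Transplanting this back to $S$ via the local graph representation and the isometry relation $\mathrm{II}\cdot\mathrm{II} = \kappa\, g$ (interpreted distributionally) then yields the bounded extrinsic curvature property and the nonnegativity of the curvature of $S$, which is the Besov analog of Theorem \ref{iso-bou-cur}; Theorem \ref{full-plane} follows from the same local analysis combined with the global convexity extraction.

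The main obstacle, and the reason the statement is merely conjectural, is precisely the borderline character of the embedding $B^{1+s}_{2/s,\infty} \hookrightarrow C^1$: two factors of $\nabla v$ live only in $L^{1/s}$, which is exactly the Onsager-critical integrability for the quadratic form $\nabla v \otimes \nabla v$, so all estimates must exploit the $\ell^\infty$ Littlewood--Paley character of the Besov norm rather than pointwise Hölder bounds, and every $O(\epsilon^\beta)$ error from the $C^{1,\alpha}$ proof has to be replaced by an $o(1)$ dyadic tail estimate. Verifying that the chart-change calculations of Proposition \ref{chart-cur} and the delicate area-of-the-Gauss-map argument of \cite[Chapter IX]{Po56} survive this softer control is the bulk of the "extra technical maneuvers" envisaged, along the same lines as \cite{LS20,LPS21}.
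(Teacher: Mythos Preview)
This statement is recorded in the paper as a \emph{conjecture}; no proof is given, only the remark that it should follow ``with some extra technical maneuvers'' along the lines of \cite{LS20,LPS21}. There is therefore nothing in the paper to compare your proposal against, and your write-up is, appropriately, a programme rather than a proof.

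That said, two steps in your outline would not go through as written. First, the embedding $B^{1+s}_{2/s,\infty}(\Omega)\hookrightarrow C^1(\Omega)$ you rely on is false: at the critical scaling $1+s-2/p=1$ with third index $q=\infty$ one only lands in the Zygmund class $B^1_{\infty,\infty}$, not in $C^1$, so $\nabla v$ and the Gauss map $\vec n$ need not be continuous. This is exactly why the conjecture is nontrivial: the classical Brouwer degree you invoke is unavailable, and the ``extra maneuvers'' the paper alludes to are presumably the degree and area formulas for possibly discontinuous maps via the distributional Jacobian developed in \cite{LS20,LPS21,Olb2,GO20}, not a reduction to the continuous case.

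Second, your version of the Kirchheim perturbation is not the one the paper uses and does not work. The paper (Proposition~\ref{deg} and Corollary~\ref{convex1}) perturbs by a rotation, $u^\delta=\nabla v+\delta(-x_2,x_1)$, exploiting the algebraic identity $\det(\nabla^2 v+\delta J)=\det\nabla^2 v+\delta^2$ for symmetric $\nabla^2 v$ and skew $J$; positivity of the Jacobian then follows directly from $\Det\mathcal D^2 v\ge 0$ for every $\delta>0$. Your perturbation $w_\epsilon=v_\epsilon+\tfrac{\lambda_\epsilon}{2}|x|^2$ instead requires $\lambda_\epsilon$ to dominate the negative part of the smallest eigenvalue of $\nabla^2 v_\epsilon$, and nonnegativity of the \emph{determinant} gives no control on individual eigenvalues, so the claim $\lambda_\epsilon\to 0$ is unjustified. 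Any Besov extension should carry over the rotation perturbation, not the convexification you propose.
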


The paper is organized as follows: In Section \ref{MA-sec}, we will recall some known statements about the very weak  Hessian determinant and its properties regarding the topological degree and the change of variable formula.  In Section \ref{iso}, we define and establish the bounded extrinsic  property for the graphs of $C^{1,\alpha}$ functions whose very weak Hessian determinants are nonnegative.  We will prove Theorem \ref{iso-bou-cur} in Section \ref {iso-case}. Finally, Section \ref{sec-full-plane} is dedicated to a short proof of Theorem \ref{full-plane}. 

\medskip

\subsection {Acknowledgments.}
This project was based upon work supported by the National Science Foundation and was partially funded by the Deutsche Forschungsgemeinschaft (DFG, German Research Foundation). M.R.P. was supported by the NSF award DMS-1813738 and by the DFG  via SFB 1060 - Project Id  211504053.  This project was completed while the author was visiting Institut \'Elie Cartan de Lorraine at Nancy supported by CNRS funding, and he is grateful for their respective hospitality and support.

\section{Properties of the very weak Hessian in higher regularity regimes}\label{MA-sec}

In this section we gather some statements on the properties of $\Det \mathcal{D}^2 v$ in our H\"older regime of regularity and draw some useful conclusions.  The first statement  is in the same line of the known results on the compensated regularity  of the distributional  Jacobians dating back to Wente \cite{W69} and spanning many important contributions, in particular \cite{WY99a, BN11} concerning  fractional regularity. But it is rather formulated for the very weak Hessian determinant operator and the proof uses its structure.  For definitions and statements regarding
Besov spaces  $B^s_{p,q}$ we refer to \cite{RuSi, Tri06}.
\begin{lemma}\label{compens}
Let $V \subset \R^2$  be an open smooth bounded domain, $\alpha \in (1/2, 1)$ and $v\in C^{1,\alpha}(\overline V)$. Then 
\begin{equation}\label{compens-eq}
\Det {\mathcal D}^2  v \in B^{2\alpha-2}_{\infty, \infty}(V). 
\end{equation}
 \end{lemma}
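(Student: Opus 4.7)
The plan is to exploit a hidden divergence structure in $\Det \mathcal{D}^2 v$ that, combined with Bony paraproduct analysis, compensates for the naive loss of two derivatives. The starting point is the identity valid pointwise for smooth $v$:
\bees
\Det {\mathcal D}^2 v = \partial_1(\partial_1 v\,\partial_{22}v) - \partial_2(\partial_1 v\,\partial_{12}v),
\eees
whose two sides both equal $\det \nabla^2 v$. After extending $v$ from $V$ to $\R^2$ in a $C^{1,\alpha}$-preserving way, I would upgrade this to a distributional identity by interpreting the products through Bony's decomposition: since $\partial_1 v \in B^{\alpha}_{\infty,\infty}$ and $\partial_{ij} v \in B^{\alpha-1}_{\infty,\infty}$ with total regularity $2\alpha-1>0$, the products are well-defined, and the identity passes from the smooth case by a mollification-density argument.

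Next, I would apply the Bony splitting $fg = T_f g + T_g f + R(f,g)$ to each product, with $f=\partial_1 v$ and $g=\partial_{ij}v$. Two of the three pieces are easy: standard paraproduct estimates give $T_g f,\,R(f,g)\in B^{2\alpha-1}_{\infty,\infty}$ (using $g\in B^{\alpha-1}_{\infty,\infty}$ with negative index for $T_g f$, and the positivity $2\alpha-1>0$ for $R$), so one further outer derivative puts their contribution into $B^{2\alpha-2}_{\infty,\infty}$, as desired. The dangerous piece is $\partial_1 T_{\partial_1 v}\partial_{22}v - \partial_2 T_{\partial_1 v}\partial_{12}v$, whose inner paraproducts only sit in $B^{\alpha-1}_{\infty,\infty}$. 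Here I would invoke the derivative commutator $\partial_k T_{\phi} g = T_{\partial_k \phi} g + T_{\phi}\partial_k g$ to rewrite the expression as
\bees
T_{\partial_1 v}(\partial_{122}v - \partial_{212}v) \;+\; T_{\partial_{11}v}\partial_{22}v \;-\; T_{\partial_{12}v}\partial_{12}v.
\eees
The first term vanishes by the equality of mixed distributional derivatives; the two remaining paraproducts have both arguments in $B^{\alpha-1}_{\infty,\infty}$, and the bound $\|T_f g\|_{B^{s+t}_{\infty,\infty}} \lesssim \|f\|_{B^{t}_{\infty,\infty}}\|g\|_{B^{s}_{\infty,\infty}}$, valid for $t<0$, places them in $B^{2\alpha-2}_{\infty,\infty}$. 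Collecting contributions yields \eqref{compens-eq}.

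I expect the main technical obstacle to lie not in the paraproduct bookkeeping above, which is essentially routine once the structure is identified, but rather in the careful justification that the initial divergence-form identity and the Schwarz cancellation inside the paraproduct persist at the distributional level for merely $C^{1,\alpha}$ data. This ultimately reduces to an approximation argument using a mollification $v_\varepsilon$ and the continuity of the bilinear paraproduct and remainder operators in the relevant Besov spaces; the assumption $\alpha>1/2$ is precisely what keeps every product in sight above the threshold of distributional well-definedness.
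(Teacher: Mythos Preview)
Your argument is correct but follows a genuinely different route from the paper. The paper proceeds by mollification: it extends $v$ to $\R^2$, writes $(\Det\mathcal{D}^2 v)\ast\psi_\e = -\tfrac12\,\mathrm{curl}\,\mathrm{curl}\,(\nabla v\otimes\nabla v)_\e$, and then exploits the Conti--De Lellis--Sz\'ekelyhidi commutator estimate $\|(\nabla v\otimes\nabla v)_\e - \nabla v_\e\otimes\nabla v_\e\|_{C^k}\lesssim \e^{2\alpha-k}$ together with $\|\nabla^2 v_\e\|_0\lesssim\e^{\alpha-1}$ to obtain $\|(\Det\mathcal{D}^2 v)\ast\psi_\e\|_0\lesssim\e^{2\alpha-2}$; the Besov membership then follows from the characterisation of $B^{2\alpha-2}_{\infty,\infty}$ via the growth of mollifications. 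This is shorter and needs no paraproduct machinery, only the CDS commutator lemma, which the paper must quote anyway for later purposes. Your approach, by contrast, isolates the mechanism of compensation explicitly: the Leibniz identity $\partial_k T_\phi g = T_{\partial_k\phi}g + T_\phi\partial_k g$ together with Schwarz symmetry kills the only term that would otherwise sit at regularity $\alpha-2$, and the remaining paraproduct and remainder pieces land in $B^{2\alpha-2}_{\infty,\infty}$ by standard estimates. This is the ``null-form'' or div--curl viewpoint, more in the spirit of the Wente/Sickel--Youssfi references the paper cites; it is heavier in prerequisites but makes transparent why $\alpha>1/2$ is the natural threshold (positivity of $2\alpha-1$ for the resonant term) and would generalise more readily to other bilinear structures. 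The approximation step you flag as the main technical point is indeed routine: both your paraproduct expression and the paper's $-\tfrac12\,\mathrm{curl}\,\mathrm{curl}(\nabla v\otimes\nabla v)$ are continuous in $v\in C^{1,\alpha'}$ for $\alpha'<\alpha$, so equality for smooth $v$ passes to the limit.
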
 
\begin{proof} 
We can extend $v\in C^{1,\alpha}(\overline V)$ to $\tilde v \in C^{1,\alpha}(\R^n)$ such that 
$$
\|\nabla \tilde  v\|_{0,\alpha} \le C \|v\|_{1,\alpha;V}.
$$ For  a standard 2d mollifier $\psi\in C^\infty_c(B_1(0))$ and the sequence $\psi_\e (x) =
\e^{-2}\psi(x/\e)$, we define  $\tilde v_\e:=\tilde v \ast \psi_\e$ and $(\nabla \tilde v\otimes \nabla  \tilde v)_\e  :=(\nabla \tilde v\otimes \nabla  \tilde v) \ast \psi_\e$ on $V$. We calculate for any $0<\e\le 1$  
$$
\begin{aligned}
\| (\Det {\mathcal D}^2 \tilde v) \ast \psi_\e\|_0 & \le  \|\mbox{curl}\,\mbox{curl} (\nabla \tilde v \otimes \nabla \tilde v)_\e\|_0 
\\ & \le     \|\mbox{curl}\,\mbox{curl} (\nabla \tilde v \otimes \nabla \tilde v)_\e - \mbox{curl}\,\mbox{curl} (\nabla \tilde v_\e \otimes \nabla \tilde v_\e) \|_0  +\| \mbox{curl}\,\mbox{curl} (\nabla \tilde v_\e \otimes \nabla \tilde v_\e)\|_0  
\\& \le C (\e^{2\alpha-2} \|\nabla \tilde v\|^2_{0,\alpha}  + \|\det \nabla^2 \tilde v_\e\|_0 ) \le C \|v\|^2_{1,\alpha;V} \e^{2\alpha-2}, 
\end{aligned} 
$$ where according to \cite[Lemma 1]{CDS} (see also \cite[Lemma 4.3]{LP17}) we used the commutator estimate 
\begin{equation}\label{com-est}
 \| (\nabla \tilde v \otimes \nabla \tilde v)_\e  -    (\nabla \tilde v_\e \otimes \nabla \tilde v_\e)\|_{k}\leq   C  \e^{2\alpha-k}\|\nabla \tilde v\|^2_{{0,\alpha}}  
\end{equation}   and  the estimate  
$$
\|\nabla^2 \tilde v_\e\|_0 \le C\e^{\alpha-1} \|\nabla \tilde v\|_{0,\alpha}.
$$  Hence we obtain  from \cite[Corollary 1.12]{Tri06} that $\Det {\mathcal D}^2 \tilde v \in B^{2\alpha-2}_{\infty, \infty}(\R^n)$. Restricting this distribution to $V$ yields \eqref{compens-eq}  according to \cite[Definition 2.4.1/2]{RuSi}.
\end{proof}

Now, note that when $\alpha>2/3$, $C_c^{0,\alpha} ( V)$ densely embeds into $W_0^{2-2\alpha, 1}(V)$, since $2-2\alpha<\alpha$, and that
by \cite[Proposition 2.1.5]{RuSi}
\begin{equation}\label{dlty}
B^{2\alpha-2}_{\infty, \infty}(\R^n) = (B^{2-2\alpha}_{1,1}(\R^n))' = (W^{2-2\alpha, 1}(\R^n))'.
\end{equation}    As a consequence, by an extension argument,  $B^{2\alpha-2}_{\infty, \infty}(V)  \hookrightarrow (C_c^{0,\alpha} ( V))'$ and the action of $\Det {\mathcal D}^2  v $ any element of $C_c^{0,\alpha} (V)$ is defined. Remember that for a continuous function $u:\overline U\to \R^2$,  $\deg(u, U, y)$  is its Brouwer degree  at a point $y\in \R^2 \setminus  u(\partial U)$. We now  state a slightly more general formulation of the degree formula from \cite{LP17}. 
 
\begin{proposition}\label{deg}
Let $\Omega \subset \R^2$  be an open domain.
Assume that  
$ 2/3 <\alpha <1$,  $v\in C^{1,\alpha}(\Omega)$. 
For  $\delta\in \R$, and $x= (x_1, x_2) \in \Omega$ set 
$
u^\delta(x_1,x_2):= \nabla v(x_1, x_2) + \delta(-x_2, x_1).
$
Let $U\Subset \Omega$ be an open set. Then for every\footnote{$~$An approximation argument through Dynkin's $\pi$-$\lambda$ theorem yields the same result for  $g\in L^\infty(\mathbb{R}^2)$ with $\mathrm{supp }~ g\subset \R^2\setminus \nabla v(\partial U)$. We will not need this fact.}  $g\in C^\infty_c( \R^2\setminus  u^\delta (\partial U))$ the following formula holds true:
 
\bee\label {area}
\int_{\R^2} g(y) \deg(u^\delta, U, y)~ \mathrm{d}y =  \Det {\mathcal D}^2 v [  g \circ  u^\delta]  + \delta^2 \int_U (g \circ  u^\delta) \, \mathrm{d}x. 
\eee
 \end{proposition}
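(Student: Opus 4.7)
The plan is to approximate $v$ by mollification and pass to the limit. Let $\psi_\e$ be a standard two-dimensional mollifier and set $v_\e := v\ast \psi_\e$, so that $v_\e\in C^\infty$ with $v_\e\to v$ in $C^{1,\alpha'}(\overline U)$ for every $\alpha'<\alpha$; correspondingly $u^\delta_\e := \nabla v_\e+\delta(-x_2,x_1)\to u^\delta$ uniformly on $\overline U$. Since $\nabla^2 v_\e$ is symmetric, a direct calculation gives $\det\nabla u^\delta_\e=\det\nabla^2 v_\e+\delta^2$. The classical change-of-variables formula with Brouwer degree, applied to the smooth map $u^\delta_\e$, then yields
\begin{equation*}
\int_{\R^2} g(y)\,\deg(u^\delta_\e,U,y)\,\mathrm{d}y \,=\, \int_U (g\circ u^\delta_\e)\,\det\nabla^2 v_\e\,\mathrm{d}x \,+\,\delta^2\int_U (g\circ u^\delta_\e)\,\mathrm{d}x,
\end{equation*}
and the proof reduces to passing $\e\to 0$ on both sides.

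For the left-hand side, since $\mathrm{supp}\,g$ lies at positive distance $d$ from $u^\delta(\partial U)$, the uniform convergence $u^\delta_\e\to u^\delta$ combined with local constancy of the Brouwer degree yields $\deg(u^\delta_\e,U,y)=\deg(u^\delta,U,y)$ for every $y\in\mathrm{supp}\,g$ and all sufficiently small $\e$, so dominated convergence concludes. The $\delta^2$-term converges by uniform convergence of $g\circ u^\delta_\e$. The delicate step is to prove
\begin{equation*}
\int_U (g\circ u^\delta_\e)\det\nabla^2 v_\e\,\mathrm{d}x\,\longrightarrow\,\Det\mathcal{D}^2 v\,[g\circ u^\delta].
\end{equation*}
To this end, using the identity $\det\nabla^2 v_\e=-\tfrac12\cc(\nabla v_\e\otimes\nabla v_\e)$ and the commutativity of mollification with derivatives, I would decompose
\begin{equation*}
\det\nabla^2 v_\e=(\Det\mathcal{D}^2 v)\ast\psi_\e+R_\e,\qquad R_\e:=-\tfrac{1}{2}\cc\bigl[(\nabla v_\e\otimes\nabla v_\e)-(\nabla v\otimes\nabla v)\ast\psi_\e\bigr].
\end{equation*}
Note that $g\circ u^\delta$ is compactly supported inside $U$ (since $\mathrm{supp}\,g\cap u^\delta(\partial U)=\emptyset$) and belongs to $C^{0,\alpha}_c(V)$ for a smooth intermediate $V$ with $U\Subset V\Subset\Omega$; under $\alpha>2/3$, the embedding $C^{0,\alpha}_c(V)\hookrightarrow W^{2-2\alpha,1}_0(V)$ combined with the duality~\eqref{dlty} and Lemma~\ref{compens} makes $\Det\mathcal{D}^2 v\,[g\circ u^\delta]$ a bounded pairing. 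Then by Fubini the main term equals $\Det\mathcal{D}^2 v\,[(g\circ u^\delta_\e)\ast\psi_\e]$, and since $(g\circ u^\delta_\e)\ast\psi_\e\to g\circ u^\delta$ in $C^{0,\alpha'}$ for every $\alpha'<\alpha$ (hence in the predual $B^{2-2\alpha}_{1,1}$), a Besov-continuity argument gives convergence to $\Det\mathcal{D}^2 v\,[g\circ u^\delta]$. For the remainder, one integration by parts together with~\eqref{com-est} at $k=1$ yields $\|{\rm curl}\,[(\nabla v_\e\otimes\nabla v_\e)-(\nabla v\otimes\nabla v)\ast\psi_\e]\|_\infty\le C\e^{2\alpha-1}$, while $\|\nabla(g\circ u^\delta_\e)\|_\infty\le C\|\nabla^2 v_\e\|_\infty\le C\e^{\alpha-1}$, so $|R_\e[g\circ u^\delta_\e]|\le C\e^{3\alpha-2}\to 0$ precisely when $\alpha>2/3$.

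The principal obstacle is therefore the compatibility between the negative-index regularity of $\Det\mathcal{D}^2 v$ supplied by Lemma~\ref{compens} and the merely Hölder-continuous test function $g\circ u^\delta$. It is worth noting that the same critical threshold $\alpha>2/3$ governs both the Besov duality ($2-2\alpha<\alpha$) and the vanishing of the commutator remainder ($3\alpha-2>0$), a coincidence that marks the boundary of what the present approach can reach.
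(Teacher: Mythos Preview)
Your proposal is correct and follows essentially the same approximation strategy as the paper. The paper's proof is much terser: it observes that the distributional Jacobian ${\rm J}(u^\delta)$ is well-defined and continuous under $C^{0,\alpha}$ convergence for $\alpha>1/2$ (citing \cite{BN11}), deduces ${\rm J}(u^\delta)=\Det\mathcal{D}^2 v+\delta^2$ by approximation, and then invokes the degree formulas for distributional Jacobians already established in \cite{LPS21,Olb2,GO20}, leaving the details to the reader. You instead unpack the limiting argument directly for $\Det\mathcal{D}^2 v$ via the commutator estimate \eqref{com-est} and the Besov duality \eqref{dlty}; this is a self-contained version of what those references do. One small remark: in your bound $\|\nabla(g\circ u^\delta_\e)\|_\infty\le C\|\nabla^2 v_\e\|_\infty$ you should also account for the $\delta$-rotation in $\nabla u^\delta_\e$, which contributes an additional harmless term $C|\delta|\,\e^{2\alpha-1}$ to the remainder.
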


 \begin{proof}
Note that when $u\in C^{0, \alpha}(\Omega, \R^2)$, $\alpha>1/2$, the distributional Jacobian derivative ${\rm J}(u):={\rm Det}(\nabla u)$  is well-defined  as an element of $\mathcal D'(\Omega)$ and  ${\rm J}(u_k) \rightarrow {\rm J} (u)$ if $u_k \to u$ in  $C^{0, \alpha}$-norm \cite[Corollary 1]{BN11}. It can hence be shown  by approximation that  for $v\in C^{1,\alpha}(\Omega)$ in the same regime of regularity we have
$$
{\rm J}(u^\delta) = \Det {\mathcal D}^2 v + \delta^2.
$$ As a consequence,  when $v\in C^{1,\alpha}$ and $\alpha>2/3$,  \eqref{area}  can be also deduced from similar degree formulas for the distributional Jacobian,  see \cite[Lemma 3.1]{LPS21} and also \cite{Olb2, GO20}. We leave the details to the reader. 
 \end{proof}
  
\begin{corollary}\label{convex1}
Let $\Omega \subset \R^2$  be any open domain, $U\Subset \Omega$ be  an open set. Assume that $\alpha, v$ are as in Proposition
\ref{deg} and that 
$$
f:= \Det {\mathcal D}^2  v\ge 0 \quad \mbox{in}\,\, \mathcal{D}'(\Omega).
$$  Then  
\begin{itemize}
\item[(i)] $\deg(\nabla v, U, \cdot) \ge \rchi_{\nabla v (U) \setminus \nabla v (\partial U)}$
\item[(ii)] $\deg(\nabla v, U,\cdot)\in L^1(\mathbb{R}^2 \setminus \nabla v(\partial U))$ and:
$$\int_{\R^2 \setminus \nabla v(\partial U)} \deg(\nabla v, U, y)~
\mathrm{d}y\leq  \mu_f (U). $$
\end{itemize}
\end{corollary}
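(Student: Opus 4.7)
My approach is to apply the degree formula \eqref{area} of Proposition~\ref{deg} as the main tool, exploiting the sign hypothesis $f\ge 0$ via two different choices of the perturbation parameter $\delta$. The formula expresses the Lebesgue integral of $g\cdot \deg(u^\delta, U, \cdot)$ in terms of two nonnegative quantities when $g\ge 0$: the pairing of $\mu_f$ with $g\circ u^\delta$, and $\delta^2 \int_U g\circ u^\delta \,\mathrm{d}x$. Part (ii) will follow from the case $\delta=0$, and part (i) from a perturbation argument with $\delta>0$.

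For (ii), I would first obtain a.e. nonnegativity of $\deg(\nabla v, U, \cdot)$ on $\R^2\setminus \nabla v(\partial U)$ by plugging nonnegative test functions $g$ into \eqref{area} with $\delta=0$: the right-hand side becomes $\int_U (g\circ \nabla v)\,\mathrm{d}\mu_f \ge 0$, after extending $g\circ \nabla v$ by zero outside $U$ in order to make sense of the pairing. Since the degree is locally constant on components of $\R^2\setminus \nabla v(\partial U)$, this upgrades to pointwise nonnegativity. Then, choosing an increasing sequence $g_k\in C_c^\infty(\R^2\setminus \nabla v(\partial U))$ with $g_k\nearrow 1$ pointwise and applying monotone convergence to both sides of \eqref{area} yields the bound $\int \deg\,\mathrm{d}y \le \mu_f(U)$, since $\rchi_{\R^2\setminus \nabla v(\partial U)}\circ \nabla v \le \rchi_U$.

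For (i), the core argument, I fix $y_0 \in \nabla v(U)\setminus \nabla v(\partial U)$ with preimage point $x_0 \in U$, choose $r>0$ small enough that $B_r(y_0)$ is disjoint from both $\nabla v(\partial U)$ and $u^\delta(\partial U)$ for all small $\delta$ (by uniform convergence $u^\delta\to \nabla v$), and take $g\in C_c^\infty(B_r(y_0))$ nonnegative with $g(y_0)>0$. For small $\delta>0$, continuity ensures $u^\delta(x_0)\in B_r(y_0)$, hence $\int_U g\circ u^\delta \,\mathrm{d}x > 0$, and formula \eqref{area} produces a strictly positive value for $\int g \cdot \deg(u^\delta, U, \cdot)\,\mathrm{d}y$. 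Since $\deg(u^\delta, U, \cdot)$ is a constant integer on the connected set $B_r(y_0)\subset \R^2\setminus u^\delta(\partial U)$, that constant must be at least $1$. Homotopy invariance of the Brouwer degree under uniform convergence then transfers the inequality $\deg(u^\delta, U, y_0)\ge 1$ to $\deg(\nabla v, U, y_0)\ge 1$ as $\delta\to 0$.

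The main obstacle is precisely the passage in (i) from mere nonnegativity of the degree to the sharp lower bound of $1$ on the image. The rotation perturbation $u^\delta$ has distributional Jacobian equal to $f+\delta^2$, strictly positive due to the $\delta^2$ term, and this \emph{Jacobian bonus} is what forces the integer-valued degree to be at least $1$ on the connected component containing $y_0$. Carefully orchestrating the three uses of uniform convergence $u^\delta\to\nabla v$ -- to ensure $y_0\notin u^\delta(\partial U)$ for small $\delta$, to place $u^\delta(x_0)$ inside $B_r(y_0)$, and to invoke homotopy invariance of the degree at the end -- is the key technical point of the argument.
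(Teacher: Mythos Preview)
Your proposal is correct and follows essentially the same approach as the paper. The paper orders the argument as (i) then (ii), using the full lower bound from (i) to justify the monotone convergence in (ii), whereas you first establish the weaker nonnegativity $\deg(\nabla v,U,\cdot)\ge 0$ (which already suffices for monotone convergence) and only afterwards prove the sharp bound (i); in (i) the paper phrases the first step as the general inequality $\deg(u^\delta,U,\cdot)\ge \rchi_{u^\delta(U)\setminus u^\delta(\partial U)}$ before localizing, while you localize directly with a single bump function $g$, but the underlying mechanism---the $\delta^2$ term forcing strict positivity, constancy of the integer-valued degree on components, and stability under the uniform limit $u^\delta\to\nabla v$---is identical.
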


\begin{proof}
 Let $u^\delta$ be defined as in Proposition \ref{deg}.  We recall that $\deg(u^\delta, U, \cdot)$ is well-defined and
constant on each connected component $\{W_i\}_{i=0}^\infty$ of
$\R^2\setminus u^\delta(\partial U)$, and that it equals $0$ on $\R^2\setminus u^\delta(\overline U)$.  It follows from (\ref{area}) and the nonnegativity of $f$  that for any $\delta >0$ we have 
$$
\deg(u^\delta, U, \cdot) \ge \rchi_{u^\delta (U) \setminus u^\delta(\partial U)}.
$$ Now, let $ \nabla v(x)= y\in \nabla v(U)\setminus \nabla v(\partial U)$ for some $x\in U$. Choose $r>0$ such that $B(y, 2r) \subset \R^2\setminus \nabla v(\partial U)$ and $\delta_0$ small enough for which $\|u^\delta - \nabla v\|_0 <r$ for all $\delta<\delta_0$. For all such $\delta$ we have $B(y,r) \subset \R^2 \setminus u^\delta(\partial U)$, which implies that $\deg(u^\delta, U, \cdot)$ is constant on $B(y,r)$.   But $|u^\delta (x) - \nabla v(x)| < r$, i.e.\@ $u^\delta (x)  \in B(y,r)$, and hence 
$$
\deg(u^\delta, U, y) = \deg(u^\delta, U, u^\delta(x)) \ge 1.  
$$ Therefore, passing to the limit as $\delta\to 0$ and using \cite[Proposition 2.1]{Kav} we conclude with (i).

To show (ii), we apply (\ref{area}) for $\delta =0$ to an increasing sequence of   test 
functions $g_k \in C^\infty_c (\R^2\setminus\nabla v(\partial U))$ that converge pointwise to
$\rchi_{\R^2\setminus\nabla v(\partial U)}$. In view of the monotone convergence
theorem both sides of (\ref{area}) converge by (i),  from which we obtain in the limit as $k\to\infty$:

\bee\label {area3}
\int_{\R^2\setminus\nabla v(\partial U)} \deg(\nabla v, U, y)~
\mathrm{d}y = \mu_f\Big (U\setminus(\nabla v)^{-1}(\nabla v(\partial U))\Big)  \leq \mu_f(U).
\eee  \end{proof}   
\begin{proposition}\label{deg2}
Let $\Omega \subset \R^2$  be an open domain, $\alpha \in (1/2, 1)$, $v\in C^{1,\alpha}(\Omega)$. Assume that    
$$
\Det{\mathcal D}^ 2 v =f  \ge 0 \quad \mbox{in}\,\, \mathcal{D}'(\Omega),
$$    and that $U\Subset \Omega$ is  piecewise smooth.  Then
 \bee\label{area2}
\int_{\R^2} \deg(\nabla v, U, y)~ \mathrm{d}y = \mu_f(U).
\eee 
 \end{proposition}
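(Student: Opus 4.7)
The plan is to identify both sides of \eqref{area2} with a common boundary Young integral. Introduce
\[
B(\nabla v;\partial U) \;:=\; \tfrac{1}{2}\oint_{\partial U} (\partial_1 v)\,\mathrm{d}(\partial_2 v) \;-\; (\partial_2 v)\,\mathrm{d}(\partial_1 v),
\]
well-defined as a Young integral since $\nabla v|_{\partial U}\in C^{0,\alpha}$ with $\alpha>1/2$, and continuous with respect to the $C^{0,\alpha'}$-topology on $\partial U$ for any $1/2<\alpha'<\alpha$. To realize the left-hand side of \eqref{area2} as this integral, let $v_\varepsilon := v*\psi_\varepsilon$ be the standard mollification, so that for smooth $v_\varepsilon$ the classical area formula together with Stokes' theorem on the piecewise smooth region $U$ gives
\[
\int_{\R^2}\deg(\nabla v_\varepsilon,U,y)\,\mathrm{d}y \;=\; \int_U \det\nabla^2 v_\varepsilon \,\mathrm{d}x \;=\; B(\nabla v_\varepsilon;\partial U).
\]
Letting $\varepsilon\to 0^+$, the rightmost term converges to $B(\nabla v;\partial U)$ by Young-integral continuity, while on the left the homotopy invariance of the Brouwer degree gives $\deg(\nabla v_\varepsilon,U,y)\to \deg(\nabla v,U,y)$ at every $y\notin \nabla v(\partial U)$, a Lebesgue-null set since $\nabla v$ is $\alpha$-H\"older with $\alpha>1/2$. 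A uniform bound on the degree functions—the winding numbers of the $C^0$-convergent closed curves $\nabla v_\varepsilon(\partial U)$ are uniformly bounded and uniformly supported—then enables dominated convergence, producing
\[
\int_{\R^2}\deg(\nabla v,U,y)\,\mathrm{d}y \;=\; B(\nabla v;\partial U).
\]

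For the complementary estimate relating $\mu_f(U)$ to $B(\nabla v;\partial U)$, I would invoke the Kirchheim-type perturbation argument recalled in the introduction: since $\Det\mathcal{D}^2 v\ge 0$, one may approximate $\nabla v$ by a sequence $\{w_n\}$ of $C^1$ maps with $\det\nabla w_n\ge 0$ pointwise, uniformly bounded in $C^{0,\alpha}$ on a compact neighborhood of $\bar U$ and converging to $\nabla v$ in $C^{0,\alpha'}$ for every $\alpha'<\alpha$ (via standard interpolation between uniform convergence and the uniform H\"older bound). For each smooth $w_n$, Stokes' theorem gives $\int_U \det \nabla w_n\,\mathrm{d}x = B(w_n;\partial U)$, while the stability of the distributional Jacobian under $C^{0,\alpha'}$-convergence (cf.\@ \cite{BN11}) ensures that the non-negative Radon measures $\mathrm{J}(w_n):=\det\nabla w_n\,\mathcal{L}^2$ converge to the non-negative Radon measure $\mu_f$ in duality with $C_c(\Omega)$. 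Since $U\Subset \Omega$ is open, the Portmanteau-type lower-semicontinuity on open sets yields
\[
\mu_f(U) \;\le\; \liminf_{n\to\infty}\mathrm{J}(w_n)(U) \;=\; \liminf_{n\to\infty} B(w_n;\partial U) \;=\; B(\nabla v;\partial U),
\]
the last equality once more by Young-integral continuity.

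Combining these two facts with Corollary~\ref{convex1}(ii), which furnishes the upper bound $\int_{\R^2}\deg(\nabla v,U,y)\,\mathrm{d}y\le \mu_f(U)$, one obtains the chain
\[
\mu_f(U) \;\le\; B(\nabla v;\partial U) \;=\; \int_{\R^2}\deg(\nabla v,U,y)\,\mathrm{d}y \;\le\; \mu_f(U),
\]
forcing equality throughout and establishing \eqref{area2}. The main obstacle I anticipate is arranging the Kirchheim approximation to converge in the H\"older norm $C^{0,\alpha'}$ (and not merely uniformly) with a uniform $C^{0,\alpha}$ bound, since both the Young-integral continuity of $B(w_n;\partial U)$ and the distributional-Jacobian stability $\mathrm{J}(w_n)\rightharpoonup\mu_f$ depend on this strengthening of the perturbation result.
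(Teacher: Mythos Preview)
The paper disposes of this proposition in one line by citing the degree formula for distributional Jacobians of H\"older maps, \cite[Theorem~3]{GO20}: since $\mathrm{J}(\nabla v)=\Det\mathcal{D}^2 v=\mu_f$ is a nonnegative measure and $\alpha>1/2$, that result yields $\int_{\R^2}\deg(\nabla v,U,y)\,\mathrm{d}y=\mu_f(U)$ directly. Your route is genuinely different and has two real gaps.

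First, the closing inequality $\int\deg\le\mu_f(U)$ is quoted from Corollary~\ref{convex1}(ii), whose hypotheses are those of Proposition~\ref{deg} and thus require $\alpha>2/3$. The proposition you are proving assumes only $\alpha>1/2$, so the range $1/2<\alpha\le 2/3$ remains uncovered.

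Second, the Kirchheim-type $C^1$ approximation you invoke --- maps $w_n$ with $\det\nabla w_n\ge 0$ pointwise and converging in $C^{0,\alpha'}$ --- is not what Kirchheim's argument actually produces and does not appear constructible. The perturbation $u^\delta=\nabla v+\delta(-x_2,x_1)$ used in the paper has positive \emph{distributional} Jacobian but is merely $C^{0,\alpha}$; if instead one mollifies first, then $\det(\nabla^2 v_\varepsilon+\delta J)=\det\nabla^2 v_\varepsilon+\delta^2$, and the only available lower bound $\det\nabla^2 v_\varepsilon\ge -C\varepsilon^{2\alpha-2}$ forces $\delta\gtrsim\varepsilon^{\alpha-1}\to\infty$, destroying the convergence you need. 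Even granting such $w_n$, Portmanteau only delivers $\mu_f(U)\le B(\nabla v;\partial U)$, so you are still stuck with the reverse inequality.

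The whole second step is in fact unnecessary. Your mollification already gives $\int_U\det\nabla^2 v_\varepsilon=B(\nabla v_\varepsilon;\partial U)\to B(\nabla v;\partial U)$. To identify this limit with $\mu_f(U)$, write
\[
\int_U\det\nabla^2 v_\varepsilon-\int_U(\mu_f)_\varepsilon \;=\; -\tfrac12\int_U\mathrm{curl}\,\mathrm{curl}\bigl[\nabla v_\varepsilon\otimes\nabla v_\varepsilon-(\nabla v\otimes\nabla v)_\varepsilon\bigr],
\]
convert the right side to a boundary integral by Stokes on the piecewise-smooth $U$, and bound it by $C\varepsilon^{2\alpha-1}\to 0$ via~\eqref{com-est}. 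Since $\int_U(\mu_f)_\varepsilon\to\mu_f(U)$ (after a routine perturbation to a nearby domain with $\mu_f$-null boundary, all three quantities being stable under such perturbation), you obtain $\mu_f(U)=B(\nabla v;\partial U)$ for every $\alpha>1/2$, with no Kirchheim step and no appeal to Corollary~\ref{convex1}. This is essentially what underlies the result the paper cites.
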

 \begin{remark}
The statement is independent of Corollary \ref{convex1}-(ii). Here, the seemingly stronger \eqref{area2} is merely stated for a piecewise smooth domain $U$, while it is valid for a better range $\alpha>1/2$.  The inequality in Corollary \ref{convex1}-(ii)   is established for all open sets $U$, but only for $\alpha>2/3$, as its proof  uses the localized version of the degree formula \eqref{area}. Indeed it is an open problem whether \eqref{area} and Corollary \ref{convex1}-(ii)  hold for $\alpha>1/2$,  even when  $\delta=0$ and  $\Det{\mathcal D}^ 2 v$ is regular enough for the trilinear expression $\Det {\mathcal D}^2 v [  g \circ  \nabla v]$  to be well-defined.
 
Also note that when $\partial U$ is piecewise smooth, and $\alpha>1/2$, $\nabla v(\partial U)$ is of measure zero \cite[Lemma 4]{CDS}, and $deg(\nabla v, U, y) \in L^1(\R^2)$  \cite{Olb2}. This is no more valid when $U$ is arbitrary, and hence we shall remove 
$\nabla v(\partial U)$ from the domain of integration in Corollary \ref{convex1}-(ii). 
 \end{remark}
 
\begin{proof}
The proposition is a consequence of the similar degree formula for the distributional Jacobian ${\rm J}(\nabla v) = \Det \mathcal{D}^2 v$, when $v\in C^{1,\alpha}$ and $\alpha>1/2$, see \cite[Theorem 3 and its proof]{GO20}.
\end{proof}

\section{The graph $S=\xi_v(\Omega)$ as a surface of bounded extrinsic curvature.}\label{iso}

We denote by $S= \xi_v(\Omega)$ the graph of $v\in C^{1,\alpha}(\Omega)$, that is the $ C^{1,\alpha}$ surface that is the  image of 
$\xi_v(x) = (x, v(x))$.  Let  $N=\vec n\circ \xi_v\in C^{0, \alpha}(\Omega, \mathbb{S}^2)$, and we note that 
up to the choice of the orientation of $\vec n$,  and for $\eta:\mathbb{R}^2\to \mathbb{S}^2$, defined by :
\begin{equation}\label{eta}
\eta(x) = \frac{1}{\sqrt{1+ |x|^2}} (x, -1),
\end{equation} we have $N=\eta\circ\nabla v$.

\begin{proposition}\label{convex2}
Let $\Omega \subset \R^2$ be an arbitrary open set, and $\alpha, v$ be as in Proposition \ref{deg}.  Assume that 
$$
f:= \Det {\mathcal D}^2  v\ge 0.
$$  Then  for every finite collection $\{E_i\}_{i=1}^k$ of closed subsets of
$\Omega$ which are pairwise disjoint:
\begin{equation}\label{gradvBV}
\sum_{i=1}^k |\nabla v(E_i)|  \leq  \mu_f(\Omega).
\end{equation}  Moreover,   if  $\mu_f(\Omega) < \infty$,  the surface $S= \xi_v(\Omega)$ is a surface of bounded extrinsic
curvature, and is of nonnegative curvature.  Moreover, if $f \not \equiv 0$, then the positive curvature of $S$ is non-zero. 
\end{proposition}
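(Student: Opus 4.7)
The plan is to deduce \eqref{gradvBV} directly from the degree-theoretic results of Section \ref{MA-sec}, and then to translate the planar bound into geometric statements about $S=G_v(\Omega)$ via the factorisation $\vec n\circ G_v=\xi\circ\nabla v$ with $\xi$ from \eqref{xi}.

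To prove \eqref{gradvBV}, I would enclose each closed $E_i$ in a piecewise smooth open set $U_i\Subset\Omega$ (say a finite union of open balls) arranged so that the family $\{U_i\}_{i=1}^k$ remains pairwise disjoint. Two ingredients then suffice. First, the assumption $\alpha>2/3>1/2$ together with the standard H\"older dimension bound gives $\dim_{\mathcal H}(\nabla v(\partial U_i))\le 1/\alpha<2$, hence $|\nabla v(\partial U_i)|=0$. Second, Corollary \ref{convex1} yields
$$
|\nabla v(U_i)\setminus\nabla v(\partial U_i)|\le\int_{\R^2}\deg(\nabla v,U_i,y)\,\mathrm{d}y\le\mu_f(U_i).
$$
Combining these gives $|\nabla v(E_i)|\le|\nabla v(U_i)|\le\mu_f(U_i)$, and summing over $i$ with $\sum_i\mu_f(U_i)\le\mu_f(\Omega)$ by disjointness proves \eqref{gradvBV}; a compact exhaustion handles the case when some $E_i$ is not relatively compact in $\Omega$.

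For the bounded extrinsic curvature statement, $\xi$ is a smooth diffeomorphism onto the lower hemisphere with Jacobian pointwise bounded by $1$, so
$$
\mathcal H^2(\vec n(E'))\le|\nabla v(G_v^{-1}(E'))|\qquad\text{for every measurable }E'\subset S.
$$
Given a finite measurable partition of a compact patch of $S$, the pull-backs form a measurable partition of a compact subset of $\Omega$. Approximating each piece from inside by a closed subset via inner regularity of the Lebesgue measure, and applying \eqref{gradvBV} to these closed subsets, one obtains the uniform bound $\sum\mathcal H^2(\vec n(E_i'))\le\mu_f(\Omega)<\infty$.

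For the sign statements I would appeal to Pogorelov's classification \cite[Ch.\@ IX]{Po56}: on a $C^1$ surface of bounded extrinsic curvature, the negative part of the curvature measure is carried by the set of hyperbolic points, which are characterised by the Gauss map having locally strictly negative topological degree. Since $\xi$ is orientation-preserving onto its image, the local degrees of $\vec n$ and $\nabla v$ agree in sign, and Corollary \ref{convex1}(i) rules out negative degrees; hence the negative curvature of $S$ vanishes. If in addition $f\not\equiv 0$, one chooses a piecewise smooth simply connected $U\Subset\Omega$ with $\mu_f(U)>0$; Proposition \ref{deg2} then gives $\int\deg(\nabla v,U,y)\,\mathrm{d}y=\mu_f(U)>0$, so $\deg(\nabla v,U,\cdot)\ge 1$ on a set of positive Lebesgue measure, which transfers via $\xi$ to a set of positive spherical measure swept out with positive multiplicity by $\vec n$, yielding nonzero positive curvature.

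The hardest part is aligning our degree inequality with Pogorelov's framework: one must verify that the splitting of the absolute curvature into positive and negative parts is controlled pointwise by the sign of the topological degree of the Gauss map at weakly regular points, rather than by a stricter geometric convexity criterion that could conceivably fail at $C^{1,\alpha}$ regularity. The measurability issue in the bounded extrinsic curvature step is standard but needs care, since $C^{0,\alpha}$ maps with $\alpha<1$ need not map Lebesgue null sets to null sets, so any inner-approximation argument must be tied to the degree bound rather than to absolute continuity of $\nabla v$.
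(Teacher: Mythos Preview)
Your argument for \eqref{gradvBV}, for bounded extrinsic curvature, and for the vanishing of the negative curvature is essentially the paper's: enclose the $E_i$ in disjoint open $U_i\Subset\Omega$, use $|\nabla v(\partial U_i)|=0$ and Corollary~\ref{convex1}, then push forward by the diffeomorphism $\xi$; for the sign, compare the Gauss-map index at a regular point with the classification $+1,0,-1,<-1$ from \cite[p.\@ 594]{Po73} and conclude via Corollary~\ref{convex1}(i) that every regular point is elliptic. (Note that Corollary~\ref{convex1}(i) actually gives index $\ge 1$, not merely $\ge 0$, so parabolic points are excluded as well; this sharper statement is what the paper uses and you will need it below.)

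The genuine gap is in your last step, and it is exactly the point you flagged as ``the hardest part.'' From $\int\deg(\nabla v,U,y)\,\mathrm{d}y=\mu_f(U)>0$ you obtain that $\vec n$ sweeps a set of positive spherical area with positive multiplicity; but the \emph{positive curvature} is by definition the absolute curvature of the set of \emph{elliptic} points, not the absolute curvature of $G_v(U)$. You have not explained why any of this area is attributable to elliptic points rather than to irregular points of $S$. The paper closes this gap by a different mechanism: it uses the multiplicity formula (absolute curvature of $A$ equals $\int_{\mathbb S^2} m_A$) together with $\mu_f(\Omega)<\infty$ to deduce $\mathcal H^2(m_\Omega^{-1}(+\infty))=0$; then it shows by contradiction that if every point of $U$ had $m_\Omega(N(x))=+\infty$ one would get $|\nabla v(U)|=0$ and hence $\mu_f(U)=0$ via \eqref{area2}. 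Thus some $x\in U$ has finite multiplicity, i.e.\ $G_v(x)$ is regular, hence elliptic by the index argument, and then \cite[Theorem~12, p.\@ 600]{Po73} gives nonzero positive curvature. Your route can also be completed, but you must invoke the Pogorelov fact that on a surface of bounded extrinsic curvature the Gauss image of the irregular points has $\mathcal H^2$-measure zero (cf.\ \cite[Theorems~1 and 4, p.\@ 590--591]{Po73}), so that the positive-area image you produced is in fact covered by regular---hence elliptic---points. Without one of these two inputs (existence of a regular point via finite multiplicity, or null Gauss image of irregular points), the inference ``positive multiplicity $\Rightarrow$ nonzero positive curvature'' does not follow.
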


 \begin{proof}
 
Let $\{E_i\}_{i=1}^k$  be a finite collection of pairwise disjoint closed subsets of $\Omega$.   Approximating each $E_i$ from within  with an increasing sequence of compact sets,  we note that the value of $ |\nabla v(E_i)|$
 is the limit of  evaluations on the given sequence of compact subsets. Therefore, it is sufficient to assume that each $E_i$ is compact in order to establish  \eqref{gradvBV} by a passing to the limit argument for the more general case. 
 
 \medskip
 
 If each $E_i$ is compact, there are pairwise disjoint open smooth sets $U_i\supset E_i$, compactly contained in $\Omega$, for $i=1\ldots k$.  
Noting the fact that since $U_i$ are smooth and $\alpha>1/2$, we have $|\bigcup_{i=1}^k\nabla v(\partial
U_i)|=0$ \cite[Lemma 4]{CDS}. Corollary \ref{convex1} yields
\begin{equation*}
\begin{split}
\sum_{i=1}^k |\nabla v(E_i)| & \leq \sum_{i=1}^k \int_{\R^2}
\rchi_{\nabla v(U_i)} = \sum_{i=1}^k \int_{\R^2\setminus \nabla
  v(\partial U_i)} \rchi_{\nabla v(U_i)\setminus \nabla
  v(\partial U_i)} \\ & \leq \sum_{i=1}^k \int_{\R^2\setminus \nabla
  v(\partial U_i)}\deg(\nabla v, U_i,y)~\mbox{d}y \leq 
 \sum_{i=1}^k \mu_f(U_i)  \leq \mu_f(\Omega),
\end{split}
\end{equation*}
where we used the nonnegativity of $\mu_f$ in the last inequality.  
Hence (\ref{gradvBV}) is established.

\medskip

Since the map $\eta$ in (\ref{eta}) is smooth, and $|\partial_1 \eta \times \partial_2 \eta| \le 1$,
(\ref{gradvBV}) implies $\sum_{i=1}^k {\mathcal H}^2(N(E_i)) \leq \mu_f(\Omega)$  for any collection $\{E_i\}_{i=1}^k$ as specified. As a straightforward conclusion, $S$ has the bounded extrinsic curvature property  if $\mu_f(\Omega)<\infty$ and the absolute curvature of  $S$ is bounded by $\mu_f(\Omega)$.
\medskip
 
By \cite[Lemma, p.\@ 594]{Po73}, the topological index of $\vec n$ at an elliptic, parabolic or hyperbolic
point $p=\xi_v(x)\in S$, which we denote by $i(p)$, equals, respectively, $+1, 0, -1$, and the index of a flat point is less than $-1$.  Consider the contour $\gamma$ lying on the surface $S$ and encircling the point $p$ as described on \cite[p.\@ 595]{Po73}, and its projection on $\Omega$, encircling the point $x$, which is parameterized as the simple closed curve $\Gamma : \mathbb {S}^1\to \R^2$. Since $p$ is a regular point, we can fix $r>0$ and choose $\gamma$ such that the image of $\Gamma$ lies in $B_r(x)$, when $O:=\xi_v(B_r(x))$ satisfies (\ref{reg_def}). A careful examination of the geometric definition of the index on  \cite[p.\@ 595]{Po73} in our setting, considering the relationship  $\vec n \circ \xi_v = \eta \circ \nabla v$, shows that the index of $\vec n$ at a point $p$ equals the total change in the angle induced by the mapping 
$$
z: \mathbb{S}^1 \to  \mathbb{S}^1 \quad a \to z(a):=\frac{\nabla v(\Gamma(a)) - \nabla v(x)}{|\nabla v(\Gamma(a)) - \nabla v(x)|}
$$ divided by $2\pi$, when $a$ makes one counter-clockwise turn over the circle $\mathbb {S}^1$.  In other words, 
$$
i(p) = \frac{1}{2\pi}(\theta(1) - \theta(0)), 
$$ when $\theta :  [0,1] \to \R$ is the lifting of the map $ \tilde z(t):=z(e^{2\pi it})$ to $\R$, satisfying $\tilde z(t)= e^{i \theta(t)}$.  It is a well known fact that the latter quantity is  the topological degree of the mapping $z$ as a continuous mapping of the unit circle into itself (see e.g.\@ \cite[Exercice 6, p.\@ 113]{OR09}), therefore $i(p)= \deg(z)$.
\medskip 

Now we approximate $\Gamma$ uniformly with a sequence of  regular simple closed curves $\Gamma_k :\mathbb{S}^2 \to \R^2$ with images inside $B_r(x)$, and define 
$$ 
z_k: \mathbb{S}^1 \to  \mathbb{S}^1 \quad a \to  z_k(a):= \frac{\nabla v(\Gamma_k(a)) - \nabla v(x)}{|\nabla v(\Gamma_k(a))- \nabla v(x)|}.
$$ As $z_k$ converges uniformly to $z$, then ${\rm deg}(z_k)$ converges to ${\rm deg}(z)$, and degree being an integer value, we obtain that  $i(p)= {\rm deg}(z_k)$  for $k$ large enough. Fix such $k$, and let $X= \Gamma_k(\mathbb{S}^1)$.  Then $X \subset B_r(x)$ is a 1-dimensional smooth simple curve, with $X= \partial D$, and $x\in D\subset B_r(x)$. Since $\Gamma_k$ is a diffeomorphism between  $\mathbb S^1$ and $X$,  ${\rm deg}(z_k)$ equals to the degree of the mapping
$$
w: X \to \mathbb S^1 \quad y \to w(y):=  \frac{\nabla v(y) - \nabla v(x)}{|\nabla v(y) - \nabla v(x)|}
$$ defined on $X$, i.e.\@ $i(p)= \deg (w)$.  However, by \cite[Porposition IV.4.5]{OR09}, we have
$$
\deg(w) = \deg(\nabla v, D, \nabla v(x)).
$$  Since $\xi_v(B_r(x))$ satisfies \eqref{reg_def},  for all $y \in B_r(x)$, $\nabla v(y) \neq \nabla v(x)$ for $y\neq x$, which finally yields
$$
i(p) = \deg(\nabla v, D, \nabla v(x)) = \deg(\nabla v, B_{r/2}(x), \nabla v(x)), 
$$ since $\nabla v (x) \notin \nabla v(\overline D\setminus B_{r/2}(x))$   \cite[Corollaire 2.4]{Kav}.
It follows from Corollary \ref{convex1}(i) that  any regular point $p\in S$  is elliptic. 
This implies that $S$ is a surface of nonnegative curvature. 

\medskip

For $z\in \mathbb{S}^2$, and $A\subset S$, let $m_A(z)$  be  the cardinality of the set $A\cap {\vec n}^{-1}(z)$.  By \cite[p.\@ 577, Lemma 3]{Po73}, the function $m_A$ is 
measurable. It follows then from \cite[p.\@ 590, Theorem 3]{Po73} that the absolute curvature of $A$ equals $\int_{\mathbb{S}^2} m_A(z)~\mbox{d}s(z)$, for every Borel subset $A$ of $S$.  On the other hand, for any  open set $O\subset S $ and any finite collection of pairwise disjoint closed sets $\{F_i\}_{i=1}^k$ in $O$, we have for $E_i = \xi_v^{-1}(F_i)$:
$$
\sum_{i=1}^k \mathcal{H}^2 (\vec n (F_i)) =\sum_{i=1}^k \mathcal{H}^2 (N(E_i)) \le \sum_{i=1}^k  |\nabla v(E_i)|  \le    \mu_f(\xi_v^{-1}(O)). 
$$ Hence, by the definition of the absolute curvature, for any Borel set $A\subset S$ we obtain
$$
\ds \int_{\mathbb{S}^2} m_A(z)~\mbox{d}s(z) \le    \mu_f(\xi_v^{-1}(A)), 
$$ which implies for $A=S$ that 
\begin{equation}\label{mult-ifty=0}
\mu_f(\Omega)<\infty \Longrightarrow {\mathcal H}^2(m_S^{-1}(+\infty)) = 0.
\end{equation}  

If $f$ is nonzero, then for some open disk $U\subset \Omega$, $\mu_f(U)>0$. We claim that $m_S(N(x))< \infty$ for some $x\in U$. 
If, by contradiction,  $m_S(N(x))=+\infty$ for every $x\in U$,  then $N(U) \subset m_S^{-1}(+\infty)$. 
It follows from  finiteness of $\mu_f$ and \eqref{mult-ifty=0} that ${\mathcal H}^2 (N(U))\le {\mathcal H}^2(m_S^{-1}(+\infty))  =0$, yielding that $|\nabla v(U)|=0$, since the mapping $\eta$ in (\ref{eta}) is a smooth diffeomorphism between $\R^2$ and $\eta(\R^2) \subset \mathbb {S}^2$. Thus, once again using the fact that $|\nabla v (\partial U)|=0$ \cite[Lemma 4]{CDS},
(\ref{area2})  yields:
$$ \mu_f(U)= \int_{\R^2\setminus \nabla v(\overline U)}\deg(\nabla v, U, y)~\mbox{d}y=0, $$
contradicting the assumption on $U$.  Hence, there exists $x\in U$ such that $m_S(N(x))<+\infty$, yielding that 
$p= \xi_v(x)$ satisfies \eqref{reg_def}, and so it is a regular point. We already showed that any regular point must be elliptic.   
Since $S$ contains elliptic points, \cite[Theorem 12, p. 600]{Po73} implies that the positive curvature of $S$ is non-zero. \end{proof}
   
       \section{Isometric immersions of surfaces of nonnegative curvature}\label{iso-case}
      
        \subsection{Some properties of the distributional curvature}\label{iso-case-prel}
     
     Throughout this section, $\psi\in C^\infty_c(B_1(0))$ is a  standard 2d mollifier, with $\psi \ge 0$ and $\int_{\R^2} \psi =1$. For the sequence $\psi_\e (x) = \e^{-2}\psi(x/\e)$, and any function or mapping defined on an open set $\Omega\subset \R^2$, we denote, with an abuse of notation vis-\`a-vis $\psi$ itself, the mollification $f\ast \psi_\e$ by using the subscripted $f_\e$.  We will need the following consequence of \cite[Lemma 1]{CDS}:
 
 \begin{lemma}\label{3-commute}
 Let $\Omega \subset \R^2$ be an open set, $V\Subset \Omega$, and $f,g,h \in C^{0,\alpha}(\Omega)$.    Then
 $$
 \|(fgh)_\e - f_\e g_\e h_\e\|_{1;V} \le C (\|f\|_{\alpha;\Omega},  \|g\|_{\alpha;\Omega}, \|h\|_{\alpha;\Omega}) \e^{2\alpha-1}.
 $$
 \end{lemma}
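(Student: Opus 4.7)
The plan is to reduce the three-factor commutator to the two-factor one of \cite[Lemma 1]{CDS}, which asserts that for $\varphi,\psi\in C^{0,\alpha}(\Omega)$ and $k\in\{0,1\}$,
$$
\|(\varphi\psi)_\e - \varphi_\e \psi_\e\|_{k;V} \le C\,\e^{2\alpha-k}\,\|\varphi\|_{\alpha;\Omega}\|\psi\|_{\alpha;\Omega},
$$
whenever $\e$ is smaller than $\mathrm{dist}(V,\partial\Omega)$ (larger values of $\e$ can be absorbed into the constant, so I will implicitly assume $\e\le 1$ throughout). First I would telescope
$$
(fgh)_\e - f_\e g_\e h_\e \;=\; \bigl[\,((fg)\,h)_\e - (fg)_\e\, h_\e\,\bigr] \;+\; \bigl[\,(fg)_\e - f_\e g_\e\,\bigr]\, h_\e,
$$
so that each bracket involves only a two-factor commutator.

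For the first bracket, since $C^{0,\alpha}(\Omega)$ is an algebra with $\|fg\|_{\alpha;\Omega}\le C\|f\|_{\alpha;\Omega}\|g\|_{\alpha;\Omega}$, applying \cite[Lemma 1]{CDS} to the pair $(fg,h)$ at level $k=1$ produces the bound $C\e^{2\alpha-1}\|f\|_\alpha\|g\|_\alpha\|h\|_\alpha$. For the second bracket, the product rule in $C^1(V)$ gives
$$
\bigl\|\bigl[(fg)_\e-f_\e g_\e\bigr]\, h_\e\bigr\|_{1;V} \le \|(fg)_\e-f_\e g_\e\|_{1;V}\,\|h_\e\|_{0;V} + \|(fg)_\e-f_\e g_\e\|_{0;V}\,\|h_\e\|_{1;V}.
$$
Here \cite[Lemma 1]{CDS} supplies both $\|(fg)_\e-f_\e g_\e\|_{1;V}\le C\e^{2\alpha-1}\|f\|_\alpha\|g\|_\alpha$ and the sharper $\|(fg)_\e-f_\e g_\e\|_{0;V}\le C\e^{2\alpha}\|f\|_\alpha\|g\|_\alpha$, while standard mollifier estimates yield $\|h_\e\|_{0;V}\le C\|h\|_{\alpha;\Omega}$ and $\|h_\e\|_{1;V}\le C\e^{\alpha-1}\|h\|_{\alpha;\Omega}$. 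The two resulting contributions are of orders $\e^{2\alpha-1}$ and $\e^{3\alpha-1}$ respectively, both dominated by $\e^{2\alpha-1}$ since $\alpha>0$ and $\e\le 1$.

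No step presents a real obstacle: the argument is essentially the CDS two-factor commutator lemma invoked twice, together with the Leibniz rule. The only mild subtlety is the exponent bookkeeping: since we only assume $h\in C^{0,\alpha}$, we cannot afford $\|h_\e\|_{1;V}\le \|h\|_1$, and must instead trade the singular factor $\e^{\alpha-1}$ in $\|h_\e\|_{1;V}$ against the extra $\e^{\alpha}$ gained in the $C^0$ version of the two-factor commutator, so as to keep the final exponent at $2\alpha-1$.
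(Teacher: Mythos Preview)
Your proposal is correct and follows precisely the approach the paper indicates: the paper's proof merely states that the estimate is obtained by an iterated use of the two ingredients $\|\nabla a_\e\|_{0;V}\le C\e^{\alpha-1}$ and $\|(ab)_\e-a_\e b_\e\|_{j;V}\le C\e^{2\alpha-j}$ from \cite[Lemma~1]{CDS}, leaving the details to the reader; you have supplied exactly those details via the telescoping $(fgh)_\e-f_\e g_\e h_\e=[((fg)h)_\e-(fg)_\e h_\e]+[(fg)_\e-f_\e g_\e]h_\e$ together with the Leibniz rule.
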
 
 
 \begin{proof} The estimate is obtained by an iterated use of  
\begin{equation}\label{2-commute}
 \|\nabla a_\e\|_{0;V} \le C \e^{\alpha-1},\quad \|(ab)_\e - a_\e b_\e \|_{j;V} \le C \e^{2\alpha-j}.
\end{equation}  for $a,b \in C^{0,\alpha}(\Omega)$ and $j=0,1$, as proved in \cite[Lemma 1]{CDS}.  The details are left to the reader.
 \end{proof}

  \begin{lemma}\label{g-C1-conv}
Let  $U \subset \R^2$ be an open set and $g_1, g_2 \in  C^1 (U, \R^{2\times 2}_{sym, pos})$. Assume that $V\Subset U$ is an open bounded set and that $\det  g_i \ge \lambda >0$ on $V$. Then for a constant $C:=  C (\|g_1\|_{0;V},  \|g_2\|_{0;V}, \lambda) $ we have
$$
\forall \varphi \in W^{1,1}_0(V) \quad |\kappa_{g_1}[\varphi] - \kappa_{g_2}[\varphi]|  \le C \|\varphi\|_{W^{1,1}(V)} \sum_{j=0}^1 (\|g_1\|_{1;V} + \|g_2\|_{1;V} )^{2-j}   \|g_1 - g_2\|_{j;V}.
$$   
\end{lemma}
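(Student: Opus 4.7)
The plan is to bound $|\kappa_{g_1}[\varphi] - \kappa_{g_2}[\varphi]|$ by writing it as the sum of the differences of each of the three integrand contributions in the definition of $\kappa_g[\varphi]$, and applying to each the elementary telescoping identity $ABC - A'B'C' = (A-A')BC + A'(B-B')C + A'B'(C-C')$ so as to isolate a single factor of $g_1 - g_2$ (or its gradient) in every resulting piece, while controlling all remaining factors in $L^\infty(V)$ by $\|g_1\|_{1;V}$, $\|g_2\|_{1;V}$, and the lower bound $\det g_i \ge \lambda$.

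The key observation is a uniform structural one: each of the three integrand terms in $\kappa_g[\varphi]$ is the product of a smooth rational expression in $g$ (polynomial in the entries of $g$ and powers of $1/\det g$), a polynomial of total degree at most two in the first partials of $g$, and either $\varphi$ or a component of $\nabla^\perp\varphi$. Specifically, the first integrand $(1/\det g)(\mathrm{curl}\, g)\cdot \nabla^\perp \varphi$ is linear in $\nabla g$; the second integrand $(\mathrm{curl}\, g)\cdot \nabla^\perp(1/\det g)\,\varphi$ is bilinear in $\nabla g$ (since $\nabla^\perp(1/\det g)$ unpacks to a rational expression in $g$ times $\partial g$); and $L(g)\varphi$ is likewise bilinear in $\nabla g$ after expanding the Christoffel symbols, through the terms $\partial g \cdot \Gamma$ and $\Gamma \cdot \Gamma$.

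I would then apply the telescoping identity termwise. Differences of the rational-in-$g$ factors, such as $\frac{1}{\det g_1} - \frac{1}{\det g_2}$ (expanded via $1/a - 1/b = (b-a)/(ab)$) or $g_1^{ij} - g_2^{ij}$, are bounded pointwise in $V$ by $C(\|g_1\|_{0;V},\|g_2\|_{0;V},\lambda)\,\|g_1 - g_2\|_{0;V}$, while differences of $\mathrm{curl}\, g$ or $\partial g$ are bounded in $L^\infty(V)$ by $\|g_1 - g_2\|_{1;V}$. Pairing each such difference with the remaining uniformly bounded factors, which carry at most one $\nabla g_i$ in the first integrand and at most two in the second and third, produces bounds of the form $(\|g_1\|_{1;V}+\|g_2\|_{1;V})^{2-j}\|g_1 - g_2\|_{j;V}$ for $j \in \{0,1\}$, the worst case being two $\nabla g_i$ factors combined with one $\|g_1-g_2\|_{0;V}$. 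Multiplying by $\|\varphi\|_{L^1(V)}$ or $\|\nabla\varphi\|_{L^1(V)}$, both controlled by $\|\varphi\|_{W^{1,1}(V)}$, then yields the asserted estimate. The argument is essentially routine bookkeeping with no serious analytic obstacle; the only mild care required is to verify that the highest power of $\|g_i\|_{1;V}$ appearing alongside $\|g_1-g_2\|_{j;V}$ is exactly $2-j$, which is guaranteed by the structural observation above since the rational-in-$g$ prefactors bring no additional $\nabla g$.
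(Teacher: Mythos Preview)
Your proposal is correct and is precisely the ``straightforward calculation'' the paper alludes to; the paper itself gives no details beyond that phrase, so your telescoping argument together with the pointwise bounds on the rational-in-$g$ factors (controlled via $\det g_i\ge\lambda$) is exactly what is intended. The only minor point to make explicit is that terms from the first integrand contribute at most one factor of $\|g_i\|_{1;V}$ alongside $\|g_1-g_2\|_{0;V}$, but since $\|g_i\|_{1;V}\ge\|g_i\|_{0;V}\ge c(\lambda)>0$ (positive-definiteness forces a lower bound on the trace), such terms are absorbed into the $j=0$ summand with a constant depending only on $\lambda$ and the $C^0$ norms, as the statement requires.
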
 
 
The proof is obtained by straightforward  calculations.  We leave the details to the reader. 
 
\medskip
  
We now observe that  the independence of the Gaussian curvature from the coordinate system is still valid in our weaker setting.

\begin{lemma}\label{coord-ch} Assume $U$ and $g$ as in Lemma \ref{g-C1-conv}. Let $\xi:  U' \to   U$ be a $C^2$ smooth diffeomorphism. Let  $\xi^\ast g= (\nabla \xi)^T (g\circ \xi)  \nabla \xi$,  be the  pull-back of the metric $g$  by $\xi$ on $U'$. Then
\begin{equation}\label{chang-var}
\kappa_{\xi^\ast g}= \kappa_g \circ \xi,  
\end{equation} where $\kappa_g \circ \xi$ is defined  through 
$$
\forall \varphi \in C^\infty_c(U') \quad (\kappa_g \circ \xi) [\varphi] := \kappa_g [|J(\xi)|^{-1}\varphi \circ \xi^{-1}].
$$
\end{lemma}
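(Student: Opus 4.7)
\emph{Proof plan.} The strategy is a two-parameter mollification that reduces the identity to the classical invariance of the Gaussian curvature under smooth changes of coordinates, combined with the continuity of the map $g\mapsto \kappa_g$ as a $W^{1,1}_0$-dual-valued operator provided by Lemma \ref{g-C1-conv}.

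Fix $V' \Subset U'$ and $\varphi \in C^\infty_c(V')$, and set $V := \xi(V') \Subset U$. Let $\psi_\e$ denote the standard mollifier of Section \ref{iso-case-prel}, and write $g_\e := g \ast \psi_\e$ and $\xi^\delta := \xi \ast \psi_\delta$, the latter being well-defined on a neighborhood of $\overline{V'}$ for $\delta$ small. For $\e,\delta$ sufficiently small, $g_\e$ and $\xi^\delta$ are smooth, $\det g_\e$ stays uniformly bounded below on $\overline V$, and $\xi^\delta$ remains a diffeomorphism from a neighborhood of $\overline{V'}$ into $U$. Hence $(\xi^\delta)^\ast g_\e$ is a smooth Riemannian metric on $V'$, so its distributional curvature reduces to its classical pointwise Gaussian curvature, and the classical Riemannian invariance together with a change of variables yields
\[
\kappa_{(\xi^\delta)^\ast g_\e}[\varphi] = \int_{V'} \kappa_{g_\e}(\xi^\delta(x))\, \varphi(x)\, dx = \kappa_{g_\e}\!\left[|J(\xi^\delta)|^{-1}\, \varphi\circ (\xi^\delta)^{-1}\right].
\]

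Next I would pass $\delta\to 0$ with $\e$ fixed. Since $\xi\in C^2$, one has $\xi^\delta \to \xi$ in $C^2$ on a neighborhood of $\overline{V'}$, and hence $(\xi^\delta)^\ast g_\e \to \xi^\ast g_\e$ in $C^1(\overline{V'})$; Lemma \ref{g-C1-conv} applied on $V'$ then yields $\kappa_{(\xi^\delta)^\ast g_\e}[\varphi] \to \kappa_{\xi^\ast g_\e}[\varphi]$. On the right-hand side, the functions $|J(\xi^\delta)|^{-1}\, \varphi\circ (\xi^\delta)^{-1}$ are supported in a common compact subset of $V$ for small $\delta$ and converge to $|J(\xi)|^{-1}\, \varphi\circ \xi^{-1}$ in $C^1_c(V)\subset W^{1,1}_0(V)$; since $\kappa_{g_\e}$ is a bounded linear functional on $W^{1,1}_0(V)$, the right-hand side converges to $\kappa_{g_\e}[|J(\xi)|^{-1}\, \varphi\circ \xi^{-1}]$. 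Passing $\e\to 0$ next, $g_\e\to g$ in $C^1(\overline V)$ gives $\xi^\ast g_\e \to \xi^\ast g$ in $C^1(\overline{V'})$, and applying Lemma \ref{g-C1-conv} on both $V$ and $V'$ one obtains in the limit
\[
\kappa_{\xi^\ast g}[\varphi] = \kappa_g\!\left[|J(\xi)|^{-1}\, \varphi\circ \xi^{-1}\right],
\]
which is exactly \eqref{chang-var} after unpacking the definition of $\kappa_g\circ\xi$.

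The principal obstacle is that $\xi$ is only $C^2$, so even for smooth $g$ the pull-back $\xi^\ast g$ need not be $C^2$ and the classical pointwise Gauss formula cannot be applied directly to $\kappa_{\xi^\ast g_\e}$; this is what makes the inner $\delta$-mollification of $\xi$ necessary, rather than a single mollification of $g$ alone. A minor bookkeeping point is the uniform positive lower bound $\det((\xi^\delta)^\ast g_\e) = |J(\xi^\delta)|^2 \det g_\e \ge \lambda' > 0$ on $\overline{V'}$ required by Lemma \ref{g-C1-conv}, which follows from the invertibility of $\nabla \xi$ together with the uniform convergence of $\xi^\delta$ and $g_\e$.
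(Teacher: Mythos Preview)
Your proof is correct and follows the same strategy as the paper's: regularize, use the classical invariance for smooth data, then pass to the limit via the $C^1$-continuity of $g\mapsto\kappa_g$ from Lemma~\ref{g-C1-conv}. The only difference is that the paper mollifies $g$ alone and simply asserts the identity as ``well known'' for smooth $g$ (with $\xi$ still only $C^2$), whereas your two-parameter scheme also mollifies $\xi$; this makes explicit a step the paper leaves implicit, since for $C^2$ $\xi$ the pull-back $\xi^\ast g$ is only $C^1$ and the pointwise Theorema Egregium does not apply directly.
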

 \begin{proof}
Since $\xi\in C^2$, we have $\psi= |J(\xi)|^{-1}\varphi \circ \xi^{-1} \in  W^{1,1}_0(V)$ with a suitable open set $V$ with ${\rm supp} \, \psi \subset V \Subset U$, and hence, as $g\in C^1(\overline V)$,
$\kappa_g [\frac{1}{|J(\xi)|}\varphi \circ \xi^{-1}]$ is well-defined, and the value is independent of the choice of $V$.
 
\medskip
 
Note the well known fact that the identity is valid for smooth $g\in C^\infty (\overline V)$. Applying Lemma  \ref{g-C1-conv} to a regularizing sequence $g_k\in C^\infty  (\overline V)$ converging to  $g$ in $C^1(\overline V)$, the conclusion is obtained by a straightforward  passage to the limit. 
 \end{proof}
     
The following proposition is an immediate corollary. 

\begin{proposition}\label{chart-cur}
 Let $\Sigma$ be a two dimensional $C^2$ manifold and let $g$ be a $C^1$ Riemannian metric over $\Sigma$. The distributional 
 Gaussian curvature  $\kappa_g$ can be defined in each chart of $\Sigma$ and is invariant by the admissible changes of coordinates through
 the formula \eqref{chang-var},  
 and hence it is a well-defined distribution on $\Sigma$.
 \end{proposition}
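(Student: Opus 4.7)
The plan is to reduce the proposition to a routine gluing argument made possible by Lemma~\ref{coord-ch}. The analytic heavy lifting has already been done: the change-of-variables identity \eqref{chang-var} has been shown to be valid at merely $C^1$ regularity via Lemma~\ref{g-C1-conv} and smooth approximation, which is exactly what is needed to pass from local chart descriptions to a global object on $\Sigma$.

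First I would fix a $C^2$ atlas $\{(V_\alpha,\psi_\alpha)\}_\alpha$ of $\Sigma$, with each $\psi_\alpha\colon U_\alpha\subset\R^2\to V_\alpha$ a $C^2$ diffeomorphism. Since $g$ is $C^1$ on $\Sigma$ and each $\psi_\alpha$ is $C^2$, the pull-back $g_\alpha:=\psi_\alpha^\ast g$ lies in $C^1(U_\alpha,\R^{2\times 2}_{sym,pos})$, so the Euclidean definition given in Section~\ref{intro} produces a distribution $\kappa_{g_\alpha}\in\mathcal{D}'(U_\alpha)$ in each chart. For two overlapping charts $(V_\alpha,\psi_\alpha)$ and $(V_\beta,\psi_\beta)$, the $C^2$ transition map $\xi:=\psi_\alpha^{-1}\circ\psi_\beta$ satisfies $g_\beta=\xi^\ast g_\alpha$ on the overlap, and Lemma~\ref{coord-ch} delivers $\kappa_{g_\beta}=\kappa_{g_\alpha}\circ\xi$, which is precisely the transformation rule that a scalar distribution must obey on a manifold (equivalently, the transformation rule making the product of $\kappa_g$ with the Riemannian volume form a well-defined signed Radon measure on $\Sigma$).

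Finally, using a smooth partition of unity $\{\rho_\alpha\}$ subordinate to the atlas, any test function $\varphi\in C^\infty_c(\Sigma)$ decomposes as $\varphi=\sum_\alpha \rho_\alpha\varphi$ with each summand supported in a single chart, and I would define $\kappa_g[\varphi]$ chart-by-chart using the local distributions $\kappa_{g_\alpha}$. The compatibility verified in the previous step guarantees that this assignment is independent of both the atlas and the partition of unity chosen, producing a well-defined global distribution on $\Sigma$. The only real obstacle in this strategy --- the validity of the change-of-coordinates identity at the low regularity considered --- is exactly what Lemma~\ref{coord-ch} supplies, so the present proposition is essentially bookkeeping.
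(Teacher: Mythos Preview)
Your proposal is correct and matches the paper's intent exactly: the paper states this proposition as ``an immediate corollary'' of Lemma~\ref{coord-ch} without giving any further proof, and what you have written is precisely the standard partition-of-unity gluing argument that the corollary implicitly invokes. There is no alternative route or missing idea here; you have simply spelled out the bookkeeping the paper leaves to the reader.
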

   
 In what follows we discuss further properties of $\kappa_g$ when $g$ further enjoys some H\"older regularity. 
 
 \begin{proposition}\label{regularity}
 Let $U\subset \R^2$ be a bounded smooth domain. If $g\in C^{1,\alpha}(\overline U)$ for $1/2<\alpha' <\alpha<1$, then $\kappa_g$ can be uniquely extended as a bounded linear operator over $C_0^{0,\alpha'}(U)$,  and 
 $$
 \forall \varphi \in C_0^{0,\alpha'}(U)\quad   |\kappa_g[\varphi] | \le C(g) \|\varphi\|_{\alpha';U}.
 $$
 \end{proposition}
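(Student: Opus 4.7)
My plan is to identify $\kappa_g$ as an element of a negative-order Besov space and then exploit Besov duality combined with a H\"older-to-Besov embedding on bounded supports. Rearranging (via distributional integration by parts) the three integrals in the definition of $\kappa_g$, one sees formally that $\kappa_g = -\tfrac{1}{\det g}\,\mathrm{curl}\,\mathrm{curl}\,g + L(g)$ as a distribution on $U$, the two middle terms in the definition canceling. I would make this identity rigorous by mollifying $g$ to smooth $g_\epsilon$ and passing to the limit with the help of Lemma~\ref{g-C1-conv}. Since $g\in C^{1,\alpha}(\overline U)$ coincides with the H\"older--Zygmund space $B^{1+\alpha}_{\infty,\infty}(\overline U)$ for $\alpha\in(0,1)$, every distributional second derivative $\partial_{ij} g_{kl}$ lies in $B^{\alpha-1}_{\infty,\infty}(U)$, and hence so does $\mathrm{curl}\,\mathrm{curl}\,g$. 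Multiplication by $\tfrac{1}{\det g}\in C^{1,\alpha}$ preserves this space via a standard paraproduct estimate---the positive regularity $1+\alpha$ of the multiplier dominates the negative regularity $1-\alpha$ since $\alpha>0$. The residual term $L(g)\in C^{0,\alpha}$ embeds into the same negative-order Besov space, so altogether $\kappa_g\in B^{\alpha-1}_{\infty,\infty}(U)$ with norm bounded in terms of $\|g\|_{C^{1,\alpha}}$ and $\inf_{\overline U}\det g$.

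By the Besov duality $\bigl(B^{1-\alpha}_{1,1}(\R^2)\bigr)' = B^{\alpha-1}_{\infty,\infty}(\R^2)$ of \cite[Proposition~2.1.5]{RuSi} (applied after extending $g$ to $\R^2$), $\kappa_g$ pairs continuously with $B^{1-\alpha}_{1,1}$-functions supported in $\overline U$. The hypothesis $1/2<\alpha'<\alpha<1$ gives the strict inequality $\alpha+\alpha'>1$, i.e.\ $\alpha'>1-\alpha$, so the Besov embedding on compactly supported functions yields $\|\tilde\varphi\|_{B^{1-\alpha}_{1,1}(\R^2)}\le C(U,\alpha,\alpha')\|\varphi\|_{\alpha';U}$, where $\tilde\varphi$ is the zero-extension of $\varphi\in C_0^{0,\alpha'}(U)$ (valid since $\varphi$ vanishes on $\partial U$). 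Combining these bounds yields $|\kappa_g[\varphi]|\le C(g)\|\varphi\|_{\alpha';U}$ first for $\varphi\in C^\infty_c(U)$; the unique extension to all of $C_0^{0,\alpha'}(U)$ then follows by the usual density argument, with mollifications converging in the weaker $C^{0,\alpha''}$-norm for $\alpha''<\alpha'$ while their $C^{0,\alpha'}$-norms remain uniformly bounded.

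The main obstacle lies in the rigorous justification of the distributional identity for $\kappa_g$ and of the paraproduct multiplication step, both of which follow patterns already established in Lemma~\ref{compens}. The strict inequality $\alpha+\alpha'>1$ afforded by $\alpha,\alpha'>1/2$ is essential: without it, the compensated regularity gained by transferring one derivative from $\varphi$ onto a H\"older-continuous field would be insufficient to pair with H\"older test functions of exponent only $\alpha'$.
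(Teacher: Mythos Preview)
Your proposal is correct and follows essentially the same strategy as the paper: both establish $\kappa_g\in B^{\alpha-1}_{\infty,\infty}(U)$ by combining the fact that $\mathrm{curl}\,\mathrm{curl}\,g\in B^{\alpha-1}_{\infty,\infty}$ with the paraproduct estimate \eqref{prod-cont0}, and then use Besov duality to test against H\"older functions. The only difference is in the execution of this last step: the paper passes through an intermediate Hilbert-type space $B^{-\sigma}_{2,2}(U)=(\mathring{B}^\sigma_{2,2}(U))'$ with $1-\alpha<\sigma<\alpha'$ and uses the dense embedding $C_0^{0,\alpha'}\hookrightarrow \mathring{B}^\sigma_{2,2}$, whereas you invoke the duality $(B^{1-\alpha}_{1,1})'=B^{\alpha-1}_{\infty,\infty}$ from \eqref{dlty} directly together with the compact-support embedding $C_0^{0,\alpha'}\hookrightarrow B^{1-\alpha}_{1,1}$---both routes are standard and lead to the same bound.
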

 \begin{proof}
 First  note that since  $g\in C^{1,\alpha}(\overline U) = B^{1+\alpha}_{\infty, \infty}(U)$ \cite[Proposition 2.1.2]{RuSi}, we have $${\rm curl}\, {\rm curl}\,\, g  \in B^{\alpha-1}_{\infty, \infty}(U)$$ by \cite[Proposition 2.1.4/2]{RuSi}.  According to \cite[Theorem 4.6.1/2(i)]{RuSi}  the product 
\begin{equation}\label{prod-cont0}
B^{\alpha-1}_{\infty, \infty}(U) \cdot B^\alpha_{\infty, \infty}(U)     \hookrightarrow B^{\alpha-1}_{\infty, \infty}(U),  
\end{equation}  
is continuous, which yields $\kappa_g \in B^{\alpha-1}_{\infty, \infty}(U)$, since $(\det g)^{-1} \in C^{0, \alpha}(\overline U)$, 
and $L(g)$ is regular enough.  

\medskip

Now, since $\alpha >1/2$, we can fix $\sigma>0$ such that $1-\alpha<1/2< \sigma <\alpha'$. By \cite[Theorems 2.4.4/1 and  2.4.4/4]{RuSi}, we have the embedding
 $$
 B^{\alpha-1}_{\infty, \infty}(U) \hookrightarrow B^{-\sigma}_{2,2}(U) =  (\mathring{B}^\sigma_{2,2}(U))'  
 $$ On the other hand, also by \cite[Theorem 2.4.4/1]{RuSi}, $C_0^{0,\alpha'}(U) \subset B^{\alpha'}_{\infty, \infty}$ 
 embeds densely in $\mathring{B}^\sigma_{2,2}(U)$, which yields the embedding
 $$
 (\mathring{B}^\sigma_{2,2}(U))'  \hookrightarrow (C_0^{0,\alpha'}(U))'.
 $$  Overall, we obtained
 $$
 \kappa_g \in  B^{\alpha-1}_{\infty, \infty}(U) \hookrightarrow (C_0^{0,\alpha'}(U))'.
 $$ as required. 
 \end{proof}

\begin{corollary}\label{compo-xi} Assume $U$, $\alpha$, $\alpha'$ and $g$ as in Proposition \ref{regularity}. Let $\xi:  U' \to  U$ be a $C^{1,\alpha}$ smooth diffeomorphism. Then the composition $\kappa_g \circ \xi$  is a well-defined distribution and for any sequence of diffeomorphisms $\xi_k \to \xi$  converging in $C^{1,\alpha'}(U')$ to  $\xi$ we have:
$$
\forall \varphi \in C^\infty_c(U')  \quad \lim_{k\to \infty} (\kappa_g \circ \xi_k) [\varphi] = (\kappa_g \circ \xi) [\varphi]. 
$$

\end{corollary}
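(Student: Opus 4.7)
The plan is to verify in two steps that $\kappa_g\circ\xi$ makes sense on test functions and that the pairing depends continuously on $\xi$ in the $C^{1,\alpha'}$ topology.

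For well-definedness, I would exploit that a $C^{1,\alpha}$ diffeomorphism $\xi$ has an inverse $\xi^{-1}\in C^{1,\alpha}(U)$ and Jacobian reciprocal $|J(\xi)|^{-1}\in C^{0,\alpha}(U')$. Hence, for $\varphi\in C^\infty_c(U')$, the product $|J(\xi)|^{-1}\varphi$ is H\"older-$\alpha$ with support in $\mathrm{supp}\,\varphi$, and its composition with $\xi^{-1}$ lies in $C^{0,\alpha}(U)$ with compact support inside $U$. This function belongs a fortiori to $C_0^{0,\alpha'}(U)$, on which $\kappa_g$ is a bounded linear functional by Proposition \ref{regularity}, so $(\kappa_g\circ\xi)[\varphi]$ is a well-defined number.

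For the convergence statement, I would set $F_k:=(|J(\xi_k)|^{-1}\varphi)\circ\xi_k^{-1}$ and $F:=(|J(\xi)|^{-1}\varphi)\circ\xi^{-1}$ and, using the continuity of $\kappa_g$ on $C_0^{0,\alpha'}(U)$, reduce matters to showing $\|F_k-F\|_{C^{0,\alpha'}(U)}\to 0$. The natural splitting
$$F_k-F = \bigl[(|J(\xi_k)|^{-1}-|J(\xi)|^{-1})\varphi\bigr]\circ\xi_k^{-1} + \bigl[(|J(\xi)|^{-1}\varphi)\circ\xi_k^{-1}-(|J(\xi)|^{-1}\varphi)\circ\xi^{-1}\bigr]$$
then isolates the two effects. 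Since $\xi_k\to\xi$ in $C^{1,\alpha'}(U')$ classically yields $\xi_k^{-1}\to\xi^{-1}$ in $C^{1,\alpha'}(U)$ with uniform bounds (via the inverse function theorem plus a compactness argument), the bracketed factor in the first summand tends to zero in $C^{0,\alpha'}(U')$ with supports in a fixed compact set, and the standard composition bound $\|h\circ\psi\|_{C^{0,\alpha'}}\le C(\|\psi\|_{C^1})\|h\|_{C^{0,\alpha'}}$ transports this to $C^{0,\alpha'}(U)$ convergence.

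The main obstacle is the second summand, where the inner map $\xi_k^{-1}$ converges only in $C^{1,\alpha'}$ but the outer function $f:=|J(\xi)|^{-1}\varphi$ has regularity only $C^{0,\alpha}$, and a direct composition estimate does not yield the required $C^{0,\alpha'}$ convergence. To overcome this I would invoke the strict gap $\alpha'<\alpha$ together with H\"older interpolation: the family $\{f\circ\xi_k^{-1}\}$ is uniformly bounded in $C^{0,\alpha}(U)$, while $f\circ\xi_k^{-1}\to f\circ\xi^{-1}$ in $C^0(U)$ by uniform continuity of $f$ and uniform $C^0$-convergence of $\xi_k^{-1}$. The interpolation inequality
$$[g]_{C^{0,\alpha'}} \le C\,\|g\|_{C^0}^{1-\alpha'/\alpha}\,[g]_{C^{0,\alpha}}^{\alpha'/\alpha},$$
applied to $g=f\circ\xi_k^{-1}-f\circ\xi^{-1}$, then drives this second summand to zero in $C^{0,\alpha'}(U)$, completing the argument.
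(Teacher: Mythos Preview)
Your proof is correct and follows the same approach as the paper: both reduce the question to showing that the functions $(|J(\xi_k)|^{-1}\varphi)\circ\xi_k^{-1}$ converge in $C^{0,\alpha'}(U)$ and then invoke the bound from Proposition~\ref{regularity}. The paper's proof simply asserts this convergence without justification; your splitting into the Jacobian-difference term and the composition-difference term, together with the H\"older interpolation argument exploiting the strict gap $\alpha'<\alpha$, is precisely what is needed to fill in the omitted details.
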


 \begin{proof}
 It is sufficient to observe that, since the diffeomorphism $\xi\in C^{1,\alpha}(\overline U')$, we have 
 $$
 |J(\xi)|^{-1}\varphi \circ \xi^{-1} \in C^{0,\alpha'}_c(U')
 $$ and hence  the action of  $\kappa_g$ on  $\frac{1}{|J(\xi)|}\varphi \circ \xi^{-1}$ is well-defined. The continuity follows from Proposition \ref{regularity} by observing that 
 $$
 |J(\xi_k)|^{-1}\varphi \circ \xi_k^{-1} \xrightarrow{k\to \infty}|J(\xi)|^{-1}\varphi \circ \xi^{-1} \quad \mbox{in}\,\,  C^{0,\alpha'}(U'). 
 $$ \end{proof}
  
 In view of the above, we would have liked to claim that $\kappa_{\xi^\ast g} = \kappa_{g} \circ \xi$ when $\xi$ is merely of $C^{1,\alpha}$ regularity, but our observations do not guarantee that the  distributional Gaussian curvature of $\xi^\ast g$ is well-defined, as it might lack the assumed  $C^1$ regularity. We will analyze hence the regularization of the pull-back metrics.  

\begin{lemma}\label{asym-pull}
Assume $U \subset \R^2$ is an open set and $\ds \alpha>1/2$, and $g \in C^{1,\alpha} (\overline U, \R^{2\times 2}_{sym, pos})$.  Let $\xi : U' \to U$ be a $C^{1,\alpha}$  diffeomorphism and 
$$
\xi^\ast g:= (\nabla \xi)^T (g\circ \xi)  \nabla \xi
$$ be the pull-back metric. Let $(\xi^\ast g)_\e$ and $\xi_\e$ be the respective regularizations of $\xi^\ast g$ and $\xi$. If $x'\in U'$, there exists a disk $B' \Subset U'$ centered at $x'$ such that for all $\e$ small enough, $\xi_\e$ is a diffeomorphism on $\overline B'$,   
$$\ds \inf_\e \inf_{B'} \det (\xi_\e^\ast g) >0,
$$ 
$$
\|(\xi^\ast g)_\e \|_{j;B'} + \|\xi_\e^\ast g \|_{j;B'}  \le C(\|\xi\|_{1,\alpha}, \|g\|_{1,\alpha})    \e^{j(\alpha-1)}, \quad j\in \{0,1\}.
$$
and 
$$
\|(\xi^\ast g)_\e - \xi_\e^\ast g    \|_{j;B'} \le C(\|\xi\|_{1,\alpha}, \|g\|_{1,\alpha})  \e^{(1+j)\alpha-j},\quad j\in \{0,1\}.
$$
\end{lemma}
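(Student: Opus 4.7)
\emph{Proof plan.} For parts (1) and (2), I would choose a disk $B' \Subset U'$ centered at $x'$ small enough that $\xi$ is bi-Lipschitz on $\overline{B'}$ with $|\det \nabla \xi| \geq c_0 > 0$ there and $\xi(\overline{B'}) \Subset U$. Since $\xi \in C^{1,\alpha}(\overline{B'})$, standard mollification gives $\xi_\e \to \xi$ in $C^1(\overline{B'})$, so for all sufficiently small $\e$ the map $\xi_\e$ remains a diffeomorphism on $\overline{B'}$ with $\det \nabla \xi_\e \geq c_0/2$ and $\xi_\e(\overline{B'}) \Subset U$. Because $\det g \geq \lambda > 0$ on any compact subset of $U$, the identity $\det(\xi_\e^\ast g) = (\det \nabla \xi_\e)^2 \, (\det g)\circ \xi_\e$ then establishes (1) and (2).

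For part (3), writing $\xi^\ast g = A^T G A$ with $A := \nabla\xi$ and $G := g\circ\xi$ both in $C^{0,\alpha}$, one has $\xi^\ast g \in C^{0,\alpha}$ with norm controlled by $\|\xi\|_{1,\alpha}$ and $\|g\|_{1,\alpha}$. The standard mollifier estimates $\|f_\e\|_{0;B'} \le \|f\|_{0}$ and $\|\nabla f_\e\|_{0;B'} \le C\|f\|_{\alpha}\e^{\alpha-1}$ yield the desired bound on $\|(\xi^\ast g)_\e\|_{j;B'}$; for $\xi_\e^\ast g = A_\e^T (g\circ\xi_\e) A_\e$, the product rule combined with $\|A_\e\|_{1;B'} \le C\e^{\alpha-1}$ (from \eqref{2-commute}) and $\|g\circ\xi_\e\|_{1;B'} \le \|\nabla g\|_0 \|A_\e\|_0 \le C$ gives the corresponding estimate.

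For part (4), the natural decomposition is
\[
(\xi^\ast g)_\e - \xi_\e^\ast g = \bigl[(A^T G A)_\e - A_\e^T G_\e A_\e\bigr] + A_\e^T \bigl(G_\e - g\circ\xi_\e\bigr) A_\e =: T_1 + T_2.
\]
The triple commutator $T_1$ is controlled by Lemma \ref{3-commute} (together with its $j=0$ analog, obtained by iterating \eqref{2-commute}) giving $\|T_1\|_{j;B'} \le C\e^{2\alpha-j}$, which exactly matches the target $\e^{(1+j)\alpha-j}$ for $j=1$ and beats it for $j=0$. The main technical obstacle is the estimate on $T_2$, where the $C^{1,\alpha}$ regularity of $g$ rather than merely $C^{0,\alpha}$ is essential.

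For $T_2$ I would write
\[
G_\e(x) - g(\xi_\e(x)) = \int \psi_\e(x-y)\bigl[g(\xi(y)) - g(\xi_\e(x))\bigr]\,dy
\]
and expand using the $C^{1,\alpha}$ Taylor-type identity
\[
g(\xi(y)) - g(\xi_\e(x)) = \nabla g(\xi_\e(x))\cdot(\xi(y)-\xi_\e(x)) + R(x,y), \quad |R(x,y)| \le C|\xi(y)-\xi_\e(x)|^{1+\alpha}.
\]
The crucial cancellation $\int \psi_\e(x-y)(\xi(y) - \xi_\e(x))\,dy = \xi_\e(x) - \xi_\e(x) = 0$ kills the linear contribution, leaving $\|G_\e - g\circ\xi_\e\|_{0;B'} \le C\e^{1+\alpha}$ since $|\xi(y)-\xi_\e(x)| \le C\e$ on $\mathrm{supp}\,\psi_\e(x-\cdot)$. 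For the $C^1$ bound, setting $F := \nabla g(\xi) \in C^{0,\alpha}$ and using $\nabla G_\e = (FA)_\e$, the analogous splitting $\nabla(G_\e - g\circ\xi_\e) = [(FA)_\e - F_\e A_\e] + [F_\e - \nabla g(\xi_\e)]A_\e$ produces a commutator of order $\e^{2\alpha}$ plus a term of order $\e^\alpha$; the latter does not benefit from any Taylor cancellation since $\nabla g$ is only H\"older, and this is precisely why the bound cannot be improved beyond $\e^\alpha$. The product rule with $\|A_\e\|_1 = O(\e^{\alpha-1})$ then yields $\|T_2\|_{j;B'} \le C\e^{\alpha+1-j}$, which for $\alpha \in (1/2,1)$ is dominated by the required $C\e^{(1+j)\alpha-j}$ (using $1 \ge j\alpha$), completing the proof.
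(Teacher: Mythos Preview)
Your proof is correct and follows the same overall architecture as the paper: identical treatment of the diffeomorphism and determinant bounds, identical use of the standard mollifier estimates for part (3), and the same splitting $(\xi^\ast g)_\e - \xi_\e^\ast g = T_1 + T_2$ with $T_1$ handled by Lemma~\ref{3-commute}. The only substantive difference lies in the $C^0$ estimate for $G_\e - g\circ\xi_\e$. The paper uses the simple triangle inequality
\[
\|(g\circ\xi)_\e - g\circ\xi_\e\|_{0} \le \|(g\circ\xi)_\e - g\circ\xi\|_{0} + \|g\circ\xi - g\circ\xi_\e\|_{0} \le C\e,
\]
relying only on the $C^1$ regularity of $g$ and $\xi$, whereas you exploit the $C^{1,\alpha}$ regularity of $g$ via a first-order Taylor expansion at $\xi_\e(x)$ together with the cancellation $\int \psi_\e(x-y)(\xi(y)-\xi_\e(x))\,dy = 0$, obtaining the sharper bound $C\e^{1+\alpha}$. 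Your approach is more refined but the extra precision is not needed here, since either estimate, once multiplied by $\|\nabla A_\e\|_0 = O(\e^{\alpha-1})$ in the product rule, stays within the target $\e^{2\alpha-1}$. For the $C^1$ estimate on $G_\e - g\circ\xi_\e$ the two arguments are essentially equivalent: both ultimately rest on the $\alpha$-H\"older continuity of $\nabla g$ and yield the same $O(\e^\alpha)$.
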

\begin{proof}
Noting the uniform convergence  of $\nabla \xi_\e$ to $\nabla \xi$ on a neighborhood of $x'$,    the inverse function theorem can be applied uniformly on a ball $B'$ centered at $x'$, to obtain that each $\xi_\e$ is a diffeomorphism on $B'$ and $\det \xi_\e^\ast g$ is uniformly bounded from below on $B'$.  It is straightfoward that $(\xi^\ast g)_\e$ and $\xi_\e^\ast g$ are uniformly $C^0$-bounded, and the estimates on their $C^1$ norms  follow immediately from the $C^{1,\alpha}$ regularity of 
$\xi$ and $g$, via the first estimate in \eqref{2-commute}.

\medskip 
We now estimate, letting $F:=\nabla \xi \in C^{0,\alpha}(\Omega)$:
$$
\begin{aligned}
\|(\xi^\ast g)_\e - \xi_\e^\ast g \| _{j;B'}  & \le  \|\underbrace{(\xi^\ast g)_\e - F_\e^T (g\circ \xi)_\e F_\e}_{I_1}\| _{j;B'} 
+ \|\underbrace{F_\e^T (g\circ \xi)_\e F_\e  - F_\e^T (g \circ \xi_\e) F_\e}_{I_2}\| _{j;B'}   
\end{aligned}
$$
The first term is estimated using Lemma \ref{3-commute}:
 $$
\|I_1 \|_{j;B'}=  \Big \| \Big (F^T  (g\circ \xi) F\Big )_\e - F_\e^T (g\circ \xi)_\e F_\e\Big  \| _{j;B'} \le  C\e^{2\alpha-j} \le C \e^{(1+j)\alpha-j}.
 $$ For the second term, we first establish an  estimate on the $C^0$ norm: 
 $$
 \begin{aligned}
  \|I_2\| _{0;B'} & \le  C
  \| (g\circ \xi)_\e    -   g \circ \xi_\e  \| _{0;B'} 
   \\  & \le   \| (g\circ \xi)_\e    -   g \circ \xi  \| _{0;B'}  +   \| g\circ \xi_\e    -   g \circ \xi  \| _{0;B'} 
  \\ &  \le C \e \le C\e^\alpha, 
 \end{aligned} 
 $$since both $\xi$ and $g$ are $C^1$. 
For the $C^0$ norm of the derivatives of $I_2$ we have the upper bound
 $$ 
 \begin{aligned}
   \|\nabla  I_2    \| _{0;B'} & \le C \|  \nabla F_\e \|_{0;B'}  
 \| (g\circ \xi)_\e    -   (g \circ \xi_\e)  \| _{0;B'}    + C  \Big \| \nabla  \Big (  (g\circ \xi)_\e    -   g \circ \xi_\e \Big ) \Big \| _{0;B'}
 \\ & \le C \e^{\alpha-1 }\e + \Big \|\Big  (\nabla  (g\circ \xi) \Big)_ \e - \nabla (g\circ \xi) \Big \|_{0;B'} + \| \nabla (g\circ \xi)  - \nabla (g\circ \xi_\e) \|_{0;B'}
  \\ & \le C \e^{2\alpha-1}  + C\e^\alpha+ \|(\nabla g \circ \xi)  \nabla \xi - (\nabla g \circ \xi_\e) \nabla \xi_\e\|_{0;B'},
 \end{aligned} 
 $$ where we used the fact that $g\circ \xi \in C^{1,\alpha} (\overline U)$.  It remains to estimate the last term in order to conclude. We have
  $$
  \begin{aligned}
\|(\nabla g \circ \xi)  \nabla \xi - (\nabla g \circ \xi_\e) \nabla \xi_\e\|_{0;B'} 
& \le  \| ( \nabla g \circ \xi  - \nabla g \circ \xi_\e  ) \nabla \xi \|_{0;B'} 
\\ &  + \|(\nabla g \circ \xi_\e) ( \nabla \xi -   \nabla \xi_\e)\|_{0;B'}
\\ & \le  C \|(\nabla g \circ \xi)   - (\nabla g \circ \xi_\e)   \|_{0;B'}  + C  \| \nabla \xi - \nabla \xi_\e\|_{0;B'}
\\ & \le  C \|\xi   - \xi_\e   \|^{\alpha}_{0;B'}  + C\e \le C \e^{\alpha} \le C\e^{2\alpha-1}, 
 \end{aligned} 
 $$ where once again we used the $\alpha$-H\"older continuity of $\nabla g$. \end{proof}
 
  \subsection{Proof of Theorem \ref{iso-bou-cur}.} 
  Let $u: (\Omega, g) \to \R^3$ be given according to the assumptions of Theorem \ref{iso-bou-cur}, and $S'$ be given as a surface compactly contained in $S$. Consider a open set $\Omega''$, such that $S' \Subset u(\Omega'') \Subset S$, and let $S'':= u(\Omega'')$. For the first assertion, it is enough to show that for any $p\in S''$, the area induced by the Gauss map $\vec n$  on an open neighborhood of $p$ in $S''$ has finite total variation. A covering argument will then imply that $S'$ is a surface of bounded extrinsic curvature.  Without loss of generality, we can assume that $p=0$, and that $\vec n (p)= \vec e_3$, so that the tangent plane $T_pS$ is identified with $\R^2 \times \{0\}$.  
 
 \medskip
 
  Let $O$ be an open neighborhood of $p=0$ in $S''$, and let $\Omega_0:=u^{-1}(O)$, $x_0= u^{-1}(p)$. 
 We write $u:= (\tilde u, u_3)$, where $\tilde u := (u_1, u_2) : \Omega_0 \to \R^2$.  We note that 
$$
\frac{\partial_1 u \times \partial_2 u}{|\partial_1 u \times \partial_2 u|} (x_0)  = \vec n (x_0) = \vec e_3, 
$$ which yields that at $x_0$
$$
\det \nabla \tilde u  = (\partial_1 \tilde u)^\perp \cdot \partial_2  \tilde u =   (\vec e_3 \times \partial_1 u)\cdot \partial_2 u = (\partial_1 u \times \partial_2 u)\cdot \vec e_3 > 0
$$ since $u$ is an immersion. Implicit function theorem implies that there exists open sets $U\subset \Omega_0$, and $U' = \tilde u(U) \ni 0$, such that $\tilde u: U \to U'$ is a $C^1$ diffeomorphism.  Let $\xi \in C^1(U', U)$ be the inverse of $\tilde u$. If necessary by shrinking $U$ and $U'$, we can assume that $\tilde u \in C^{1,\alpha} (\overline U, \overline U')$ and $\det \nabla \tilde u >b_0>0$ in $\overline U \subset \Omega$.
\begin{lemma}
$\xi$  belongs to $C^{1,\alpha}(\overline{U'}, \R^2)$. 
\end{lemma}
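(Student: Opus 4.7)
The plan is to combine the classical inverse function identity $\nabla \xi(y) = [\nabla \tilde u(\xi(y))]^{-1}$ with the elementary fact that pre- or post-composition of a $C^{0,\alpha}$ map by a Lipschitz map preserves the $\alpha$-H\"older exponent.

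First I would extend $\xi$ to $\overline{U'}$ as a $C^1$ map. By shrinking $U$ (and correspondingly $U' = \tilde u(U)$) if necessary, we may assume that $\tilde u$ extends to a $C^{1,\alpha}$ map on an open neighborhood of $\overline U$ inside $\Omega$ with $\det \nabla \tilde u \ge b_0 > 0$ there. The inverse function theorem then yields a $C^1$ local inverse near each point of $\overline{U'}$; by injectivity of $\tilde u$ on $\overline U$ these patch into a single $C^1$ inverse defined on $\overline{U'}$, and in particular $\xi$ is Lipschitz on $\overline{U'}$.

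Next, on $\overline{U'}$ I would use the identity
\[
\nabla \xi(y) \;=\; \Phi\bigl(\nabla \tilde u(\xi(y))\bigr), \qquad \Phi(A):=A^{-1}.
\]
Since $\Phi$ is smooth---hence Lipschitz---on the compact set $K:=\nabla \tilde u(\overline U) \subset \{A:\det A \ge b_0\}$, and since $\nabla \tilde u \in C^{0,\alpha}(\overline U)$, the composite $(\nabla \tilde u)\circ \xi$ belongs to $C^{0,\alpha}(\overline{U'})$, Lipschitz precomposition by $\xi$ preserving the H\"older exponent. A further Lipschitz postcomposition by $\Phi$ again preserves the exponent, so $\nabla \xi \in C^{0,\alpha}(\overline{U'}, \R^{2\times 2})$, i.e.\ $\xi \in C^{1,\alpha}(\overline{U'}, \R^2)$.

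The only mild obstacle is the boundary-regularity issue in the first step; it is routine, resting on the fact that $\tilde u$ extends as a $C^{1,\alpha}$ map to a neighborhood of $\overline U$ on which $\det \nabla \tilde u$ is uniformly bounded below, so that the local inversion provided by the inverse function theorem is uniform up to the boundary and yields a single continuous extension of $\xi$.
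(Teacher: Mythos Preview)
Your proposal is correct and follows essentially the same approach as the paper: both use the identity $\nabla\xi = (\nabla\tilde u)^{-1}\circ\xi$, show that matrix inversion is Lipschitz on the relevant compact set (the paper does this explicitly via the cofactor formula $A^{-1}=(\det A)^{-1}\mathrm{Cof}\,A$, you do it by citing smoothness of $A\mapsto A^{-1}$), and then use that $\xi$ is Lipschitz to preserve the H\"older exponent under precomposition. Your added discussion of the boundary extension is a reasonable elaboration that the paper leaves implicit.
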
  
\begin{proof}
We estimate for any $x,y\in U$, $F= \nabla \tilde u \in C^{0,
\alpha}(\overline U, \R^{2\times 2})$ and $b:= \det \nabla \tilde u \in  C^{0, \alpha}(\overline U) $
 $$
 \begin{aligned}
\Big |(\nabla \tilde u)^{-1} (x)-(\nabla \tilde u)^{-1} (y)  \Big |   &  =  \Big |b^{-1} {\rm Cof} (F) (x)-  b^{-1} {\rm Cof} (F)(y)\Big | 
 \\ & \le  \frac {\|F\|_{0;U}} {b_0^2}| b (x) - b (y)| +  \frac {\|b\|_{0;U}} {b_0^2}| F (x)-  F (y)|
 \\ &  \le C(\|F\|_{0,\alpha;U}, \|b\|_{0,\alpha;U}, b_0)  |x-y|^\alpha
 \end{aligned}
 $$ Hence for any $x',y'\in U'$
 $$
\begin{aligned}
|\nabla \xi (x')-\nabla \xi(y')|  
& \le  \Big |(\nabla \tilde u)^{-1} (\xi (x'))-(\nabla \tilde u)^{-1} (\xi(y')) \Big |
\\ & \le   C(\|\tilde u\|_{1,\alpha;U}, b_0)  |\xi(x') - \xi(y')|^\alpha
\\ & \le C(\|\xi\|_{1;U}, \|\tilde u\|_{1,\alpha; U}, b_0)  |x'-y'|^\alpha.
 \end{aligned}
 $$
\end{proof}

To proceed we define
$$
v: U' \to \R, \quad v:= u_3 \circ \xi. 
$$ Note that $v\in C^{1,\alpha}(\overline{U'})$, $v(0) = 0$, $\nabla v(0) = 0 \in \R^2$. Consider the graph mapping 
$$
\xi_v : U' \to O, \quad \xi_v  (x') := (x', v(x')), 
$$  and note that 
$$
\xi_v = u \circ \xi,
$$ and that $\xi_v(U')=u(U) \subset O \subset S$ is the graph of   $v$ over $U'$. We obtain hence the identity
$$
g':= {\rm Id} + \nabla v \otimes \nabla v  =  (\nabla \xi_v)^T \nabla \xi_v = \nabla (u \circ \xi)^T  \nabla (u\circ \xi) =   \nabla \xi^T (g \circ \xi) \nabla \xi = \xi^\ast g, 
$$  where we used the fact that $u: (\Omega, g) \to \R^3$ is an isometry, i.e.\@
$$
\nabla u^T \nabla u =g.
$$

From now on we use the fact that by assumption $\alpha> 2/3$. Fix $ 2/3 <  \beta < \alpha$.
By Lemma \ref{compens}  we have 
$$
\Det {\mathcal D}^2  v \in B^{2\beta-2}_{\infty, \infty}(U').
$$ Also note that $(1+ |\nabla v|^2)^2 \in C^{0,\beta}(\overline U') = B^\beta_{\infty, \infty}(U')$.    
Since $\beta+ (2\beta-2) >0$, we deduce that the distributional product 
$$
\tilde \kappa_{g'}:= (1 + |\nabla v|^2)^{-2} (\Det {\mathcal D}^2 v)
$$ is well defined as an element of $B^{2\beta-2}_{\infty, \infty}(\overline U')$ \cite[Theorem 4.6.1/2(i)]{RuSi}.  

\begin{proposition}\label{V'}
$\tilde \kappa_{g'}$  can be extended as a bounded linear operator on $C_c^{0,\alpha}(U')$ and 
\begin{equation}\label{equiv-dist}
 \forall \varphi \in C^\infty_c (U')   \quad \tilde \kappa_{g'}  [ \varphi] =  \Det {\mathcal D}^2 v  [(1 + |\nabla v|^2)^{-2} \varphi]. 
\end{equation}  Moreover,  for any open set $V' \Subset U'$, let $v_\e$ is the usual mollification of $v$, defined on $V'$ for $\e$ small enough.  Then 
\begin{equation}\label{v-mol-conv}
 \forall \varphi \in C^\infty_c (V')  \quad \int_{V'}(1 + |\nabla v_\e|^2)^{-2} (\Det {\mathcal D}^2 v_\e) \, \varphi  \xrightarrow{\e\to 0}  
\tilde \kappa_{g'} [\varphi] . 
\end{equation}
\end{proposition}
 
 \begin{proof}
 
 First note that $C_c^{0,\alpha} (\overline U')$ densely embeds into $W_0^{2-2\beta, 1}(U')$, since $2-2\beta<\alpha$, which together with  the duality \eqref{dlty}, imply the first assertion through applying an extension argument.   
\medskip

To proceed, and before  proving \eqref{equiv-dist}, we first show  the last assertion \eqref{v-mol-conv}.  First we claim that
 $$
 \Det {\mathcal D}^2 v_\e \xrightarrow{\e \to 0}  \Det {\mathcal D}^2 v\quad  \mbox{in} \,\, B^{2\beta-2}_{\infty, \infty} (V').
 $$  Indeed we estimate, using \eqref{com-est} and interpolation
 $$
 \begin{aligned}
 \|\Det {\mathcal D}^2  v_\e   - ( & \Det {\mathcal D}^2  v) \ast \psi_\e\|_{B^{2\beta-2}_{\infty, \infty} (V')}
 \le \|{\rm curl} (\nabla  v_\e \otimes \nabla  v_\e) - {\rm curl} (\nabla  v \otimes \nabla  v)_\e\|_{0 ,2\beta-1;V'}
 \\ & \le \|\nabla  v_\e \otimes \nabla  v_\e - (\nabla  v \otimes \nabla  v)_\e\|^{2\beta-1}_{2;V'}  
 \|\nabla  v_\e \otimes \nabla  v_\e - (\nabla  v \otimes \nabla  v)_\e\|^{1-(2\beta-1)}_{1;V'} 
 \\ & \le C\e^{(2\alpha-2)(2\beta-1)+ (2\alpha-1)(2-2\beta)} = C\e^{2(\alpha-\beta)} \xrightarrow{\e\to 0}  0,
\end{aligned}   
 $$  which yields our claim, since as $\Det {\mathcal D}^2 v \in B^{2\alpha-2}_{\infty, \infty} (U')$,  $(\Det {\mathcal D}^2 v) \ast \psi_\e$ converges to $\Det {\mathcal D}^2  v$  in $B^{2\beta-2}_{\infty, \infty} (V')$. Further
$$
 (1 + |\nabla v_\e|^2)^{-2}   \xrightarrow{\e \to 0}  (1 + |\nabla v|^2)^{-2} \quad  \mbox{in} \,\, B^{\beta}_{\infty, \infty} (V').
$$  Now the continuity of the product 
\begin{equation}\label{prod-cont}
B^{2\beta-2}_{\infty, \infty}(V') \cdot B^\beta_{\infty, \infty}(V')     \hookrightarrow B^{2\beta-2}_{\infty, \infty}(V'),  
\end{equation} according to \cite[Theorem 4.6.1/2(i)]{RuSi}, establishes \eqref{v-mol-conv}.
\medskip
 
To show \eqref{equiv-dist}, for all $\varphi \in C^\infty_c(U')$, we choose $V' \Subset  U'$ such that ${\rm supp} \, \varphi \Subset V'$.  Applying \eqref{v-mol-conv}  we obtain, as required,  
$$
\begin{aligned}
 \tilde \kappa_{g'} [\varphi] & =  \lim_{\e\to 0} \int_{V'}(1 + |\nabla v_\e|^2)^{-2} (\Det {\mathcal D}^2 v_\e)\varphi  \\ & =  \lim_{\e\to 0}  \Det {\mathcal D}^2 v_\e [(1 + |\nabla v_\e|^2)^{-2} \varphi] \\ & = \Det {\mathcal D}^2 v [(1 + |\nabla v|^2)^{-2} \varphi],
\end{aligned}
$$ where the last equality is obtained by an extension argument via the duality \eqref{dlty}. \end{proof}
 
The following proposition reduces the problem to the case we studied in  Section \ref{MA-sec} regarding the very weak Monge-Amp\`ere equation:

\begin{proposition}\label{on-v}
$~$
\begin{itemize}
\item[(i)]  Assume that $\kappa_g\ge 0$ as a distribution.  Let  $p, x_0, U'$ as above and let  $V' \Subset U'$ be any open set containing $x_0$. Then there exists an open disk $B'\Subset V'$ containing $x_0$ such that   $\Det {\mathcal D}^2 v\ge 0$ in $B'$. 
\item[(ii)] If $\kappa_g \not \equiv 0$, then, there exists $x_0 \in \Omega$, and an open disk $B'$ containing $x_0$, such that $\Det {\mathcal D}^2 v\not \equiv 0$ in  $B'$. 
 
 \end{itemize}  
\end{proposition}

\begin{proof}
Let $v_\e$ be the mollifying sequence defined on $V'$  as above.  Consider the sequence of Riemannian metrics 
$$
g'_{(\e)} : = (\nabla \xi_{v_\e})^T  \nabla \xi_{v_\e} = {\rm Id} + \nabla v_\e\otimes \nabla v_\e, 
$$ for the graph parameterization $\xi_{v_\e} =(x, v_\e(x))$. It is a well known fact for smooth graphs that 
$$
\kappa_{g'_{(\e)}} = (1 + |\nabla v_\e|^2)^{-2} (\det \nabla^2 v_\e) = 
 (1 + |\nabla v_\e|^2)^{-2} (\Det {\mathcal D}^2 v_\e).
$$
By the estimates \eqref{2-commute}
 $$
 \|g'_\e\|_{1,V'}+  \|g'_{(\e)}\|_{1,V'} \le C \e^{\alpha-1},
 $$
 and
 $$
\|g'_\e - g'_{(\e)}\|_{j,V'} \le \|(\nabla v\otimes \nabla v)_\e -   \nabla v_\e\otimes \nabla v_\e\|_{j,V'} \le C \e^{2\alpha-j}.
$$ Now, fix the open disk $B' \ni x_0$  in $V'$ according to  Lemma \ref{asym-pull}. Note that 
$$
\det g'_{(\e)} = 1 + |\nabla v_\e|^2 \ge 1      
$$ and as  $\|\det g'_\e - (1+ |\nabla v|^2)\|_{0;V'}= \|\det g'_\e - \det g'\|_{0;V'} \le C\e^{\alpha}$, wee also have
\begin{equation}\label{bd-det-g'}
\det g'_\e \ge 1/2
\end{equation}  for $\e$ small enough.  In view of the uniform $C^0$-boundedness of both $g'_\e$ and $g'_{(\e)}$, and via an application of 
Lemma \ref{g-C1-conv} we have for all $\varphi \in C^\infty_c (B')$
 $$
 |\kappa_{g'_\e}[\varphi] - \kappa_{g'_{(\e)}}[\varphi] |  \le C \e^{3\alpha-2} \xrightarrow {\e\to 0} 0.
 $$ On the other hand, Lemmas  \ref{g-C1-conv} and \ref{asym-pull} and \eqref{bd-det-g'} together  imply  in the same manner
 $$
  |\kappa_{g'_\e}[\varphi] - \kappa_{\xi_\e^\ast g}[\varphi] | \le C \e^{3\alpha-2} \xrightarrow {\e\to 0} 0.
 $$ Combining the last two convergences, and taking into account \eqref{equiv-dist} and \eqref{v-mol-conv} we  finally obtain for all  $\varphi\in C^\infty_c(B')$:
 $$
 \begin{aligned}
 \Det {\mathcal D}^2 v  [(1 + |\nabla v|^2)^{-2} \varphi]  & =    \tilde \kappa_{g'}  [ \varphi] 
 \\& =  \lim_{\e \to 0} \int_{V'}(1 + |\nabla v_\e|^2)^{-2} (\Det {\mathcal D}^2 v_\e) \, \varphi  \\
 &= \lim_{\e \to 0}\kappa_{g'_{(\e)}}[\varphi] 
  = \lim_{\e \to 0} \kappa_{\xi_\e^\ast g}[\varphi] 
 \\& = \lim_{\e \to 0} (\kappa_{g} \circ \xi_\e )[\varphi]
  = (\kappa_g \circ \xi) [\varphi] 
 \\ & = \kappa_g  [|J(\xi)|^{-1} \varphi \circ \xi^{-1}],
\end{aligned}
 $$ where we used Lemma \ref{coord-ch}, Corollary \ref{compo-xi} in the last two lines. (i)  immediately follows by an approximation argument. 
 \medskip
 
  To see (ii), note that if $\kappa_g \not \equiv 0$, then there is a point $p = u(x_0) \in S = u(\Omega)$ for which for all $r>0$, there exists    $\varphi \in C^\infty_c(B_r(x_0))$ for which     $\kappa_g[\varphi] \neq 0$.  \end{proof} 

Applying Proposition \ref{convex2},  it follows immediately that under the assumptions of Theorem \ref{iso-bou-cur}, and for any $p\in S''$, 
$\xi_v(B') = u\circ \xi(B')$, which is an open neighborhood of $p$ in $S''$, is a surface of bounded extrinsic curvature, of nonnegative curvature, with the absolute curvature of $\xi_v(B')$ bounded e.g.\@ by the finite value $\||J(\xi)|^{-1} (1+|\nabla v|^2) ^2\|_{0;B'} \mu_{\kappa_g}(B')$. 
 
\medskip

Remember that $ \overline{S'}  \subset S''$ is compact. Therefore the local property on $S''$ is sufficient to show via  a covering argument and application of compactness property that $S'$ is a surface of bounded extrinsic curvature and of nonnegative curvature.  If the distributional   curvature of $g$ is nonzero, Proposition \ref{convex2} once again implies that for a suitable choice of $\Omega' \Subset \Omega$, and $p\in S'= u(\Omega')$, the positive curvature of  a neighborhood of $p$ in $S'$ is nonzero, and hence, since the absolute curvature is $\sigma$-additive nonnegative set function, the positive curvature of $S'$ is nonzero as required. The proof of Theorem \ref{iso-bou-cur} is complete. 
     \section{Proof of Theorem \ref{full-plane}}\label{sec-full-plane}
 
 By the assumptions of Theorem \ref{full-plane}, $v\in C^{1,\alpha}(\R^2)$  is a solution to $\Det{\mathcal D}^2 v = f\ge 0$ in $\mathcal{D'}(\R^2)$, where $2/3 < \alpha <1$ and $f\not \equiv 0$.  Since $\mu_f$ is nonzero, there exists $x\in \R^2$ for which $\mu_f(U)>0$ for some open disk $U$ containing $x$. Apply Proposition \ref{convex2} to increasing disks $\Omega= B_R(x)$ as $R\to\infty$,  and deduce that the graph of $v$ over $\R^2$ is a complete surface of nonnegative curvature with total nonzero curvature. \cite[Theorem 2, p.\@ 615]{Po73} will now imply that $S$ is an unbounded convex surface. As a consequence, $v$ is either globally convex or concave.   

\medskip

 It remains to be shown that $v$ is an Alexandrov solution to $\det\nabla^2v = \mu_f$ in $\R^2$.  We first claim that $\nabla  v$ is globally 1-1 on the set 
$\R^2_r$ of points $x\in \R^2$ for which $\xi_v(x)$ is regular.  This is obvious since the convexity of $v$ implies that if $\nabla v(x) = \nabla v(z)$ for $x\neq z$, then $\nabla v$ is constant on the segment $[x,z]$, so that $\xi_v(x)$ cannot be a regular point. 

\medskip

Now, consider any bounded piece-wise smooth open $U\subset \R^2$.  Note that by \cite[Lemma 4]{CDS}, $|\nabla v(\partial U)|=0$, a fact that we will repeatedly use in what follows. It is sufficient to prove that 
$\mu_v(U) = \mu_f(U)$ to conclude the proof.  Note that  by \cite[Theorems 1 and 4, p.\@ 590-591]{Po73}, we have 
${\mathcal H}^2(N(\R^2 \setminus \R^2_r)) = 0$, and thus  $|\nabla v (\R^2 \setminus \R^2_r)| = 0$.  Hence 
\begin{equation}\label{W1-1}
|U'|= |\nabla v(U)\setminus \nabla v(\partial U)|,
\end{equation} where  $U':= \nabla v(U)\setminus (\nabla v(\partial U) \cup \nabla v (\R^2 \setminus \R^2_r))$.  We observe that for all $y\in U'$, 
$(\nabla v)^{-1}(y) \subset \R^2_r$, which, since  $\nabla v$ is injective on $\R^2_r$,  yields that for any such $y$ that $(\nabla v)^{-1}(y) =\{x\}$ for some $x\in \R^2$. We now choose $r$ such that $O:=\xi_v(B_r(x))$  is as in \eqref{reg_def} and arguing as in Proposition \ref{convex2} we note that $\deg (\nabla v, B_{r/2}(x), y) =1$.  For   $y\in U'$, there is no other solution to $\nabla v(z) =y$ in $U$ other than $x$, which implies by the properties of the   \cite[Corollaire 2.4]{Kav}:
$$
\forall  y\in U' \qquad
\deg(\nabla v, U, y)   =\deg (\nabla v, B_{r/2}(x), y)= 1,$$
 and thus (\ref{area2}), \eqref{W1-1} and the fact that $|\nabla v(\partial U)|=0$  imply
$$\mu_v(U) = |\nabla v(U)|  = |U'|   = \int_{U'}    \deg(\nabla v, U, y)~\mbox{d}y 
  = \int_{\nabla v(U)\setminus\nabla v(\partial U)} \deg(\nabla v, U, y)~\mbox{d}y =  \mu_f(U).$$
 This concludes the proof of Theorem \ref{full-plane}.


\begin{thebibliography}{9999}
  
   

\bibitem{bori1} Yu. F. Borisov, \textit{The parallel translation on a smooth surface. III.},
Vestnik Leningrad. Univ. {\bf 14} (1959) no. 1, 34--50.

\bibitem{bori2}
Yu. F. Borisov,
\textit{Irregular surfaces of the class $C^{1,\beta}$ with an analytic metric},
 (Russian) Sibirsk. Mat. Zh. {\bf 45} no. 1, (2004), 25--61; 
translation in Siberian Math. J. {\bf 45} no. 1,  (2004), 19--52.


\bibitem{BN11}
H.~Brezis and H.~Nguyen,
\newblock \textit{The {J}acobian determinant revisited},
\newblock {Invent. Math.}, {\bf 185}(1), (2011), 17--54.
 
\bibitem{CI20}W. Cao and D. Inauen;
\textit{Rigidity and Flexibility of Isometric Extensions}, arXiv e-prints (2020), arXiv:2010.00418.
 
\bibitem{CSz19} W. Cao and L. Sz\'ekelyhidi Jr., 	\textit{Global Nash-Kuiper theorem for compact manifolds}, J. Differential Geom. 122(1),(2022), 35--68.

\bibitem{CDS} S. Conti, C. De Lellis and L. Sz\'ekelyhidi Jr., \textit{h-principle
  and rigidity for $ C^{1,\alpha}$ isometric embeddings}, 
Proceedings of the Abel Symposium 2010.

\bibitem{CDS-errata} 
S. Conti, C. De Lellis and L. Sz\'ekelyhidi Jr., \textit{Errata on \lq \lq h-principle and rigidity for $C^{1,\alpha}$ isometric embeddings"}, (2020), \\
\url{www.math.ias.edu/delellis/sites/math.ias.edu.delellis/files/Errata-iso.pdf}.  
 
 
\bibitem{DI20} C. De Lellis, D. Inauen, 
\textit{$C^{1,\alpha}$ isometric embeddings of polar caps,} Adv. Math. 363 (2020).
 
\bibitem{1/5} C. De Lellis, D. Inauen and L. Sz\'ekelyhidi Jr., \textit{
A Nash-Kuiper theorem for $C^{1,\frac{1}{5}-\delta}$ immersions of
surfaces in $3$ dimensions}, Rev. Mat. Iberoam. 34 (2018) no. 3, 1119--1152.
 
 
\bibitem{DP20} C. De Lellis and M.R. Pakzad,  \textit{The geometry of $C^{1,\alpha}$  flat  isometric immersions}, {Proc.\@ Roy.\@ Soc.\@ Edin.\@} (2024), doi:10.1017/prm.2024.55.
 
 \bibitem{DS-survey} C. De Lellis and  L. Sz\'ekelyhidi Jr., \textit{On turbulence and geometry: 
 from Nash to Onsager}, Notices Amer. Math. Soc. {\bf 66}(5) (2019), 677--685, 2019.
 
  
\bibitem{FM} I. Fonseca and J. Mal\'y, \textit{From Jacobian to Hessian: distributional form and relaxation},  
Riv. Mat. Univ. Parma, {\bf 7},  (2005), 4*,  45--74.
 
 
\bibitem{GMS} 
M.~Giaquinta, G.~Modica and J.~Sou\v{c}ek, 
\textit{Cartesian Currents in the Calculus of Variations I}, 
Springer-Verlag, New York, 1998. 

\bibitem{GO20}
 P.~Gladbach and H.~Olbermann, \textit{Coarea formulae and chain rules for the Jacobian determinant in fractional Sobolev spaces}, 
 Journal of Functional Analysis, {\bf 278}.2, (2020), 108312.

 
\bibitem{gromov2} M. Gromov, \textit{Geometric, algebraic, and analytic descendants of Nash isometric embedding theorems},
Bull. Amer. Math. Soc., {\bf  54} (2017), 173--245.

\bibitem{hanhong} Q. Han and J.X. Hong, \textit{Isometric embedding of Riemannian 
manifolds in Euclidean spaces}, Mathematical Surveys and Monographs, {\bf 130} 
American Mathematical Society, Providence, RI (2006).

\bibitem{HoVe}
P. Hornung and I. Vel\v{c}i\'c,
\textit{Regularity of intrinsically convex $H^2$ surfaces and the derivation of homogenized bending shell models},
Journal de Math\'ematiques Pures et Appliqu\'ees
{\bf 115}, (2018), 1--23.
 

\bibitem{Iwa} T. Iwaniec, \textit{On the concept of weak Jacobian and
    Hessian}, Report Univ. Jyv\"askyl\"a {\bf 83} (2001),  181--205. 
  
\bibitem{Kav} O. Kavian, \textit{Introduction \`a la
    th\'eorie des points critiques: et applications aux probl\`emes
    elliptiques}, Math\'ematiques et Applications (Book 13), Springer (1993).


\bibitem{Ki01} B. Kirchheim,
\textit{Geometry and {R}igidity of {M}icrostructures.}
Habilitation Thesis, Leip\-zig, (2001), Zbl pre01794210.

\bibitem{Kor} M.V. Korobkov, \textit{Properties of the $ C^1$-smooth
    functions with nowhere dense gradient range},   
Sibirsk. Mat. Zh. {\bf 48}  no. 6,  (2007),  1272--1284; translation
in Siberian Math. J. {\bf 48}, no. 6,  (2007), 1019--1028    

\bibitem{Kor2} M.V. Korobkov, \textit{Properties of $ C^1$-smooth
    mappings with one-dimensional gradient range}
Sibirsk. Mat. Zh. {\bf 50}  no. 5,  (2009),  874--886; translation
in Siberian Math. J. {\bf 50}, no. 5,  (2009), 874--886.  
 
\bibitem{kuiper} N.H. Kuiper, \textit{On $ C^1$-isometric
    imbeddings. {I}, {II}.},  Nederl. Akad. Wetensch. Proc. Ser. A. {\bf 58},   
(1955) 545--556, 683--689. 

 \bibitem{LMP17} M. Lewicka, L. Mahadevan and M.R. Pakzad,
\textit{The Monge-Amp\`ere constraint: matching of isometries, density
  and regularity and elastic theories of shallow shells}, 
Annales de l'Institut Henri Poincare (C) Non Linear Analysis, Vol {\bf 34}, Issue 1, 45--67,  (2017).

\bibitem{LP17} M. Lewicka and M.R. Pakzad, \textit{Convex integration for the Monge-Amp\`ere equation in two dimensions,} 
Analysis and PDE,  {\bf 10}, No. 3, (2017),  695--727. 
 
 
 \bibitem{LPS21}  S. Li, M.R. Pakzad and A. Schikorra, \textit{Fractional Sobolev isometric immersions of planar domains},   arXiv e-print (2021),  arXiv:2103.01723.

\bibitem{LS20}S. Li and A. Schikorra, \textit{$W^{s,\frac ns}$-maps with positive distributional Jacobians},  
\newblock {Pot. A.} {\bf 55}(3), (2021), 403--417. 
 

\bibitem{Olb2} H. Olbermann
{\it Integrability of the Brouwer degree for irregular arguments},
Ann. Inst. H. Poincar\'e Anal. Non-lin\'eaire {\bf 34}-4 (2017), 933-959.
 
 
 \bibitem{Pak} 
 M.R. Pakzad, \textit{On the Sobolev space of isometric immersions},
Journal of Differential Geometry, {\bf 66}, (2004), 47--69.
  
  
\bibitem{Nash2} J. Nash, \textit{$ C^1$ isometric imbeddings}, Ann. Math., {\bf 60}, (1954), 383--396. 
  
 \bibitem{OR09}
 E. Outerelo and J. Ruiz, \textit{Mapping Degree Theory}, Graduate Studies in Mathematics, 108, 
108 American Mathematical Society, Providence, RI; Real Sociedad Matem\'atica Espa\~{n}ola, Madrid, 2009.

\bibitem{Po56} A.V. Pogorelov, \textit{Surfaces with bounded extrinsic curvature} (Russian), Kharhov, 1956.

 
\bibitem{Po73} A.V. Pogorelov, \textit{Extrinsic geometry of convex surfaces}, Translation of Mathematical
Monographs vol. 35, American Math. Soc., 1973.
   
\bibitem{RuSi}
T.~Runst and W.~Sickel,
\newblock {\it Sobolev spaces of fractional order, {N}emytskij operators, and
  nonlinear partial differential equations}, volume~3 of {De Gruyter Series
  in Nonlinear Analysis and Applications}, 
\newblock Walter de Gruyter \& Co., Berlin, 1996. 
 
  
 
 \bibitem{WY99a}
W.~Sickel and A.~Youssfi.
\newblock The characterisation of the regularity of the {J}acobian determinant
  in the framework of potential spaces.
\newblock {\em J. London Math. Soc. (2)}, 59(1),  (1999), 287--310. 
 
\bibitem{Sve} V. \v{S}ver\'ak, \textit{On regularity for the Monge-Amp\`ere equation
without convexity assumptions}, preprint,  Heriot-Watt University (1991).
 
\bibitem{Tri06} 
H. Triebel, 
Theory of Function Spaces III (Monographs in Mathematics, 100),  Birkh\"auser 2006. 
 
 
\bibitem{W69}
H.~C. Wente.
\newblock \textit{An existence theorem for surfaces of constant mean curvature},
\newblock J. Math. Anal. Appl., 26 (1969), 318--344. 
  

\end{thebibliography}
\end{document}